\newtheorem{theo}{Theorem}[section]
\newtheorem{lem}[theo]{Lemma}
\newtheorem{prop}[theo]{Proposition}
\newtheorem{cor}[theo]{Corollary}
\newtheorem{rem}[theo]{Remark}
\newtheorem{defi}[theo]{Definition}
\DeclareMathOperator{\esssup}{ess\,sup}
\DeclareMathOperator{\supp}{supp}
\newcommand{\T}{\mathbb{T}}
\newcommand{\N}{\mathbb{N}}
\newcommand{\R}{\mathbb{R}}
\newcommand{\eps}{\epsilon}
\newcommand{\dx}{ \mathrm{d}x}
\newcommand{\dt}{\mathrm{d}t}
\newcommand{\dd}{\mathrm{d}}
\newcommand{\image}{\mathrm{im}\;}
\renewcommand{\eps}{\varepsilon}
\renewcommand{\epsilon}{\varepsilon}
\def\Xint#1{\mathchoice
	{\XXint\displaystyle\textstyle{#1}}%
	{\XXint\textstyle\scriptstyle{#1}}%
	{\XXint\scriptstyle\scriptscriptstyle{#1}}%
	{\XXint\scriptscriptstyle\scriptscriptstyle{#1}}%
	\!\int}
\def\XXint#1#2#3{{\setbox0=\hbox{$#1{#2#3}{\int}$ }
		\vcenter{\hbox{$#2#3$ }}\kern-.58\wd0}}
\def\dashint{\Xint-}
\numberwithin{equation}{section}
\DeclareMathAlphabet{\mathcal}{OMS}{cmsy}{m}{n}
\begin{document}
\allowdisplaybreaks
	\title{Measure-valued low Mach number limits of ideal fluids}
	
	\author{Dennis Gallenm\"uller\footnotemark[1]}
	
	\date{}
	
	\maketitle
	
	\begin{abstract}
		As a framework for handling low Mach number limits we consider a notion of measure-valued solution of the incompressible Euler system which explicitly formulates the usually suppressed dependence on the pressure variable. We clarify in which sense such a measure-valued solution is generated as a low Mach limit and state sufficient conditions for this convergence. For the special case of pressure-free solutions we are able to give such sufficient conditions only on the incompressible solution itself. As a necessary condition for low Mach limits we obtain a Jensen-type inequality. The same necessary condition actually holds true for limits of weak solutions or vanishing viscosity limits. We illustrate that this Jensen inequality is not trivially fulfilled. Since measure-valued solutions in the classical sense are always generated by weak solutions, this shows that our solution concept contains more information and low Mach limits lead to a partial selection criterion for incompressible solutions.
	\end{abstract}
	
	\renewcommand{\thefootnote}{\fnsymbol{footnote}}
	
	\footnotetext[1]{Institute of Applied Analysis, Ulm University, Helmholtzstra\ss e 18, 89081 Ulm, Germany.\\Email: dennis.gallenmueller@uni-ulm.de}

\section{Introduction}
	All fluids in nature are viscous and compressible to at least a small extent. On the other hand, to simplify the physical models one tries to study the systems in the limit of zero viscosity and where the fluid is assumed to be incompressible. The hope is now to infer properties of the imperfect fluid from the results of the ideal case. Therefore, it is necessary to understand the limit process for the parameters governing the viscosity and the compressibility. In this article, we will mainly investigate the low Mach number limit, which describes the transition from compressible to incompressible fluids. Moreover, also some results concerning the vanishing viscosity will be discussed.\\
	We start by considering the isentropic compressible Euler equations in the nondimensionalized formulation, cf. \cite{KM81},
	\begin{equation}
		\begin{aligned}
			\partial_t(\rho^{\eps}u^{\eps})+\operatorname{div}(\rho^{\eps}u^{\eps}\otimes u^{\eps})+\nabla\frac{1}{\eps}(\rho^{\eps})^{\gamma}&=0,\\
			\partial_t\rho^{\eps}+\operatorname{div}(\rho^{\eps}u^{\eps})&=0,\label{eq:isentropiceulereps}
		\end{aligned}
	\end{equation}
	where the unknowns are the velocity $u$ and the density $\rho$ with upper index $\eps$. Here, $\sqrt{\eps}$ denotes the Mach number of the fluid, which measures the compressibility of the fluid and is proportional to the reciprocal of the speed of sound. The constitutive relation for the pressure function $P(\rho)=\frac{1}{\eps}\rho^{\gamma}$ with $\gamma>1$ models the isentropic motion of perfect gases. This choice for the pressure function is quite standard and will be, in fact, crucial for some of our results especially in Section~\ref{sect:thepressurefreecase} below for the case $\gamma=1+\frac{2}{d}$. However, most of the methods of the present paper can also be adapted to other well-behaved choices for the pressure function. Note that we will consider the above system only with periodic boundaries, so it has to be complemented just by suitable initial conditions. As a first criterion for physical solutions we may require the energy to be constant or at least non-increasing. This will be formalized in Section~\ref{sect:preparation} by using the total energy inequality. Although the existence of solutions fulfilling a stronger local energy inequality is known, we will stick to the weaker total energy admissibility as we will apply the results from \cite{GW21}, where only the total energy can be controlled.\\
	We chose to use the velocity formulation instead of the momentum formulation for the isentropic Euler system, since both are equivalent for densities bounded away from zero and the velocity formulation is formally closer to the incompressible limit system. This system is achieved by sending the Mach number $\sqrt{\eps}$ to zero. An informal argument, using an expansion of the dependent variables in terms of $\eps$, shows that a sequence of solutions of (\ref{eq:isentropiceulereps}) should converge to a solution of the incompressible Euler equations
	\begin{equation}
		\begin{aligned}
			\partial_tu+\operatorname{div}(u\otimes u)+\nabla P&=0,\\
			\operatorname{div}u&=0.\label{eq:incompressibleeuler}
		\end{aligned}
	\end{equation}
	The velocity $u$ is again unknown, but now also the pressure $P$ is a priori an unknown variable. This gives already a hint on the quite different roles the pressure plays in the compressible and incompressible situation. However, note that for weak solutions the pressure is in fact completely determined by the velocity via
	\begin{align*}
		P=(-\Delta)^{-1}\operatorname{div}\operatorname{div}(u\otimes u)
	\end{align*}
	due to the incompressibility condition. By weak solutions we mean solutions in the sense of distributions. Those weak solutions enable us to describe the behavior of less regular fluid flows like shock waves or turbulence as opposed to smooth strong solutions modeling laminar flow. An even weaker concept of solution was introduced by DiPerna to describe oscillation effects in limits of solutions, cf. \cite{D85}. We will call those \textit{classical measure-valued solutions}. Note that we do not have to deal with possible concentration effects, since we will only consider uniformly bounded sequences. Measure-valued solutions consist of parametrized probability measures, so-called Young measures, which solve the equations only in an averaged sense. Thus, these solutions give us only a probability distribution of the velocity at each point in spacetime. Weak solutions correspond then to the case of an atomic Young measure. The concept of measure-valued solutions provides us not only with a simpler existence theory, but also many limit processes, like vanishing viscosity sequences or convergence of numerical schemes, are so far only known to generate measure-valued solutions, see e.g. \cite{FL18}.\\
	The main purpose for the present study was to better understand the interplay of the compressible and incompressible models, and to examine the different roles of the pressure therein. Our first task is to come up with a framework to formulate the low Mach limit on the level of measure-valued solutions in order to find conditions on how to decide whether a solution is generated by a low Mach limit of compressible weak solutions or not. Our approach now is to generalize the notion of classical measure-valued solutions, where the pressure does not appear as variable due to the usage of divergence-free test functions. This is achieved through treating the pressure as independent variable by allowing all test functions, see Definition~\ref{defi:mvsdefinitions}. We will call those solutions \textit{augmented measure-valued solutions}. This provides us with a tool to discuss the transition from compressible to incompressible fluid dynamics on the level of measure-valued solutions. Hereby, the role of the pressure has to be quite prominent in contrast to the previously mentioned solution concepts, where the pressure is usually projected away. For the compressible system measure-valued solutions  have been developed in \cite{N93}. In this case, we do not need to generalize our solution concept, since we already test against not necessarily divergence-free functions and the compressible pressure appears explicitly. However, note that the pressure in the compressible case behaves quite differently as it is a prescribed function of the density variable.\\
	The limit process of low Mach numbers can be made rigorous for sufficiently regular strong solutions, cf. \cite{KM81} and \cite{E77}. 
	If the incompressible system admits a sufficiently regular strong solution to a given initial data, then also measure-valued low Mach sequences with appropriately convergent initial data converge to the incompressible classical solution, cf. \cite{FKM19}. The latter result is obtained by using the relative energy inequality, see \cite{GSW15}. Another consequence of the relative energy method is the weak-strong uniqueness property of classical incompressible measure-valued and weak solutions, as observed in \cite{BDS11}. In Section~\ref{sect:preparation} we will see that augmented measure-valued solutions possess this property at least in the velocity variable. However, we have to trade for the weakness of augmented solutions with the non-uniqueness in the pressure variable, which is only fixed up to the first moment.\\
	The problem now is that it is still an open question if in three space dimensions for every regular initial data the corresponding solution of the incompressible Euler equations stays regular globally. For non-smooth initial data the situation becomes in fact very rough. The method of convex integration introduced for fluid dynamics in \cite{DS09} and \cite{DS10} by C.~De~Lellis and L.~Sz\'ekelyhidi shows that there exists initial data for the incompressible Euler system admitting infinitely many energy admissible weak solutions. This happens actually for an $L^2$-dense set of initial data, which has been shown in \cite{SW12}. Therefore, in general we cannot expect that we are in the situation above of having a regular limit solution. On the other hand, the existence of weak solutions, hence in particular of (classical) measure-valued solutions, is guaranteed for every initial data, cf. \cite{W11}. However, note that the weak solutions obtained in \cite{W11} cannot satisfy any admissibility criterion as their total energy increases at the initial time. We give sufficient conditions for the existence of a low Mach limit in this general case, i.e.~without assuming the existence of a regular strong solution, in Section~\ref{sect:limitsoflowmachsequences}. Here, we also specify how the low Mach limit on the level of measure-valued solutions should be understood: An augmented solution $\mu$ is a low Mach limit if a certain lift $(P^{\eps}_{\bar{\rho}})_{\#}\nu^{\eps}$ of the low Mach sequence $\nu^{\eps}$ converges to $\mu$ in the sense of Young measures, cf. Theorem~\ref{theo:measuresequenceimpliessolution}, where $P_{\bar{\rho}}^{\eps}$ is the lift map introduced in Section~\ref{sect:preparation}.\\
	Note that the method of convex integration has been applied also to the compressible system with prescribed density, see e.g. \cite{C14}, \cite{DS10}, and \cite{F14}. But only recently a fully compressible generalization has been worked out in \cite{DSW21}, see also \cite{M21}.\\
	Other singular limits like the limit of weak solutions or the vanishing viscosity limit have been studied more extensively in the past years. Although it is still unknown if vanishing viscosity limits of Leray-Hopf solutions converge to weak solutions of the incompressible Euler system, it has been observed already in \cite{DM87} that such limits generate at least a classical measure-valued solution if the Leray-Hopf solutions are uniformly bounded. It is also an open question if every incompressible measure-valued solution is such a limit. However, for the compressible system it has been shown in \cite{CFKW15} and \cite{GW20} that there are di-atomic measure-valued solutions not coming from a vanishing viscosity sequence of compressible Navier-Stokes solutions. In fact, the latter solutions are also not generated by limits of compressible weak solutions. The key observation in these works is that the underlying states are not wave-cone-connected with respect to the linearly relaxed Euler system (\ref{eq:relaxedeulersystem}). This is deeply connected to the violation of a Jensen-type inequality. Such Jensen conditions first appeared in the calculus of variations, where D.~Kinderlehrer and P.~Pedregal used them to characterize gradient Young measures, cf. \cite{KP91} and \cite{KP94}. This idea has been extended by I.~Fonseca and S.~Müller in \cite{FM99} to general $\mathcal{A}$-free Young measures generated by $\mathcal{A}$-free sequences, where $\mathcal{A}$ is a linear homogeneous differential operator. In fact, such Jensen conditions in terms of $\mathcal{A}$-quasiconvex functions are the general criterion to decide if a compressible measure-valued solution is the limit of compressible weak solutions or not, cf. \cite{GW21}. Hereby, the relaxation to a linear homogeneous differential equation, a truncation method from \cite{G21} and \cite{M99}, and the ideas from \cite{FM99} are crucial.\\
	Contrary to that, in the incompressible situation every classical measure-valued solution is generated by a limit of weak solutions as has been shown in \cite{SW12}.
	On the other hand, there are augmented incompressible solutions which are not generated by weak limits or a vanishing viscosity limit, cf. Section~\ref{sect:examplesviolatingthejensencondition}. This seems to be a contradiction. The resolution of that lies in the fact that the Jensen condition corresponding to the relaxed system without density is trivially fulfilled if the freedom in the choice of the pressure is sufficiently large, which is the case for classical measure-valued solutions. For augmented measure-valued solutions we will not have this freedom anymore, since the pressure is already explicitly a part of the solution. In particular, the Jensen-type condition may not be trivially satisfied in this case. In fact, this Jensen condition is also a necessary condition for augmented solutions to be a low Mach limit. This will be our main result.
	\begin{theo}\label{theo:jensencond}
		Suppose that an augmented measure-valued solution $\mu$ is generated either by a low Mach number limit with uniformly bounded pressure lifts, or by a limit of uniformly bounded incompressible weak solutions with uniformly bounded pressure. Then $\mu$ satisfies the following Jensen condition
		\begin{align}
			\left\langle S_{\#}\mu,f\right\rangle\geq Q_{\mathcal{A}_E}f\left(\left\langle S_{\#}\mu,\operatorname{id}\right\rangle\right)\label{eq:jensencondition}
		\end{align}
		for all $f\in C(\R^N)$ on a subset of $(0,T)\times \T^d$ of full measure.
	\end{theo}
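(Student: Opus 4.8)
The plan is to reduce the statement to the characterization of $\mathcal{A}$-free Young measures via Jensen inequalities due to Fonseca and M\"uller \cite{FM99}, applied to the linearly relaxed Euler system (\ref{eq:relaxedeulersystem}) with associated operator $\mathcal{A}_E$. The central point is that in both generating scenarios the relevant state, obtained by applying the map $S$ which collects the velocity, its tensor square, and the pressure into a single vector in $\R^N$, is produced by a sequence that is asymptotically $\mathcal{A}_E$-free and generates exactly the pushforward Young measure $S_\#\mu$.

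First I would make the relaxation explicit: writing the state as $(u, M, q)$ with $M$ a placeholder for $u\otimes u$ and $q$ for the pressure, the incompressible system (\ref{eq:incompressibleeuler}) becomes the linear constraint $\mathcal{A}_E(u, M, q) = 0$, namely $\partial_t u + \operatorname{div} M + \nabla q = 0$ together with $\operatorname{div} u = 0$, so that $S$ is the continuous map $(v,p) \mapsto (v, v\otimes v, p)$. Next I would treat the two cases. For a limit of uniformly bounded incompressible weak solutions $u^k$ with uniformly bounded pressures $P^k$, the states $z^k = (u^k, u^k\otimes u^k, P^k)$ satisfy $\mathcal{A}_E z^k = 0$ exactly as distributions and, by uniform boundedness and the resulting absence of concentrations, generate $S_\#\mu$. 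For a low Mach limit, where by Theorem~\ref{theo:measuresequenceimpliessolution} one has convergence of the lifted Young measures $(P_{\bar{\rho}}^{\eps})_{\#}\nu^{\eps} \to \mu$, I would first extract a generating sequence of functions by a diagonal argument, using the metrizability of Young-measure convergence on a fixed bounded set; reading off (\ref{eq:isentropiceulereps}), the corresponding states satisfy $\mathcal{A}_E z^{\eps} \to 0$ in $W^{-1,p}_{\loc}$, since the continuity equation and vanishing density fluctuations force $\operatorname{div} u \to 0$, while the singular pressure gradient is absorbed into the slot $q$ through the uniform bound on the pressure lifts, the remaining commutator and density-error terms being compact in negative Sobolev norms. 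Applying the continuous map $S$ to this diagonal sequence again generates $S_\#\mu$.

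With an asymptotically $\mathcal{A}_E$-free generating sequence for $S_\#\mu$ in hand, I would conclude by invoking the necessity direction of \cite{FM99}: any Young measure so generated satisfies $\langle S_\#\mu, f\rangle \geq Q_{\mathcal{A}_E} f(\langle S_\#\mu, \operatorname{id}\rangle)$ against the $\mathcal{A}_E$-quasiconvex envelope $Q_{\mathcal{A}_E} f$, for all $f \in C(\R^N)$ and almost everywhere, which is precisely (\ref{eq:jensencondition}).

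The hard part will be the low Mach case: one must verify that after the lift the singular term $\nabla \tfrac{1}{\eps}(\rho^{\eps})^{\gamma}$ is captured by the pressure slot $q$ rather than spoiling $\mathcal{A}_E$-freeness, and that the now nonconstant density contributes only terms that are compact in negative Sobolev norms. This is exactly where the hypothesis of uniformly bounded pressure lifts is essential and where the explicit structure of the lift $P_{\bar{\rho}}^{\eps}$ from Section~\ref{sect:preparation} enters; by contrast, the extension of \cite{FM99} from exactly $\mathcal{A}_E$-free to merely asymptotically $\mathcal{A}_E$-free generating sequences, as well as the passage from the Young-measure convergence to a diagonal function sequence, are routine but should be stated explicitly.
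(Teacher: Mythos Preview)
Your overall strategy---produce a generating sequence for $S_\#\mu$ which is $\mathcal{A}_E$-free (at least asymptotically) and then invoke the Fonseca--M\"uller characterization---is the same as the paper's. The execution, however, differs in two places, and one of them is a real gap.

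In the low Mach case you propose to show that the lifted states $z^\eps=S\bigl(P^\eps_{\bar\rho}(\rho^\eps,u^\eps)\bigr)$ satisfy $\mathcal{A}_E z^\eps\to 0$ in a negative Sobolev norm, absorbing the density errors as compact remainders. The paper avoids this entirely by a cleaner observation: the \emph{compressible} lift $\tfrac{1}{\bar\rho}C^{\eps}_{\bar\rho}(\rho^\eps,u^\eps)=\tfrac{1}{\bar\rho}\bigl(\rho^\eps,\rho^\eps u^\eps,\rho^\eps u^\eps\!\ocircle u^\eps,\tfrac{1}{\eps}((\rho^\eps)^\gamma-\bar\rho^\gamma)+\rho^\eps\tfrac{|u^\eps|^2}{d}\bigr)$ is \emph{exactly} $\mathcal{A}_E$-free, since $(\rho^\eps,u^\eps)$ solves the compressible system and subtracting the constant $\bar\rho^\gamma/\eps$ does not affect $\nabla Q$. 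One then checks that $S\bigl(P^\eps_{\bar\rho}(\rho^\eps,u^\eps)\bigr)-\tfrac{1}{\bar\rho}C^{\eps}_{\bar\rho}(\rho^\eps,u^\eps)\to 0$ pointwise a.e.\ (using $\rho^\eps\to\bar\rho$ a.e., which follows from the uniform bound on the pressure lift), so both sequences generate the same Young measure $S_\#\mu$. This is where the density slot in the relaxed system~(\ref{eq:relaxedeulersystem}) and in the paper's lift $S$ is essential; your version of $S$ and $\mathcal{A}_E$ drops it, which would force you through the asymptotic route you describe. That route is plausible but the compactness claims you make (``commutator and density-error terms compact in negative Sobolev norms'') are not obvious and would need to be verified carefully; the paper's trick sidesteps them completely. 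Also note that no diagonal extraction from Young measures is needed: the low Mach limit, as defined after Corollary~\ref{cor:sequenceimpliessolution}, already comes with a sequence of weak solutions.

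The second point is a genuine gap. Theorem~4.1 in \cite{FM99} yields the Jensen inequality only for $f$ with $p$-growth when the generating sequence is $p$-equiintegrable; it does not directly give (\ref{eq:jensencondition}) for \emph{all} $f\in C(\R^N)$. The paper handles this by proving an $L^\infty$-variant (its Proposition~\ref{prop:generalizedjensencond}): a uniformly bounded $\mathcal{A}$-free generating sequence yields the Jensen inequality with the \emph{truncated} envelope $Q^{8R}_{\mathcal{A}}$ for every continuous $f$, and then one uses $Q^{8R}_{\mathcal{A}_E}f\ge Q_{\mathcal{A}_E}f$. You should either prove this $L^\infty$ statement or explain how to remove the growth restriction from \cite{FM99} in the uniformly bounded setting.
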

	Note that we do not need any conditions on the initial data for this result. The definitions of the differential operator $\mathcal{A}_E$ and of the lifts, in particular the function $S$, will be provided in Section~\ref{sect:preparation}. Similarly to the proofs in \cite{GW21} and \cite{CFKW15}, proving the above necessary condition highly relies on the pullback to the relaxed system (\ref{eq:relaxedeulersystem}). The crucial observation here is that the relaxed Euler system actually does not perceive the Mach number since the whole pressure term $\frac{1}{\eps}(\rho^{\eps})^{\gamma}$ is replaced by a variable. Therefore, the $\mathcal{A}_E$-free structure survives in the limit and we can use abstract considerations involving manipulation of quasiconvex envelopes and Jensen conditions for measures generated by $\mathcal{A}_E$-free sequences. Let us briefly mention that Theorem~\ref{theo:jensencond} also holds true in the case of other constitutive relations for the compressible pressure, since the explicit form of the pressure never appears in the proof due to the substitution in the relaxed Euler system.\\
	The above discussion indicates that some augmented solutions are in a sense unphysical. In view of the vast non-uniqueness, which is even given for smooth initial data at least in the pressure variable, we therefore suggest a novel partial selection criterion for incompressible solutions:\\
	\textit{Augmented measure-valued solutions of the incompressible Euler system should be discarded as unphysical if they are not generated by a low Mach limit of compressible weak solutions.}\\
	See also Remark \ref{rem:selectioncriterion} below. Note that we do not claim to obtain uniqueness from this selection. We only give a criterion to sort out some solutions which definitely fail to be a candidate for the physical solution. One may complement this by the selection due to vanishing viscosity limits. However, in a sense, the low Mach limit is better understood since we can give sufficient conditions on augmented solutions to be a low Mach limit at least in the pressure-free case, cf. Section~\ref{sect:thepressurefreecase}. For vanishing viscosity limits the situation is much more difficult as such sufficient conditions are not yet available. In this context, it should be also further investigated if low Mach limits actually generate classical measure-valued solutions, i.e. if the a priori only augmented solution is atomic in the pressure. However, this appears to be a challenging problem as the pressure then necessarily needs to be non-locally connected to the velocity by the inverse Laplacian.\\
	Let us summarize the scope of this paper. We will begin with stating the main definitions and giving some preliminary results in Section~\ref{sect:preparation}. In Section~\ref{sect:limitsoflowmachsequences} we provide some sufficient conditions for low Mach sequences to converge to an augmented solution. Conversely, in Section~\ref{sect:necessaryjensentypeconditionsforincompressiblesingularlimits} we obtain necessary conditions for low Mach limits in terms of a Jensen-type inequality, whereby we prove Theorem~\ref{theo:jensencond}. Subsequently, the pressure-free case is studied in Section~\ref{sect:thepressurefreecase} in which we obtain sufficient conditions on the augmented solution itself that guarantee the existence of a low Mach sequence generating this solution. The last section is then dedicated to finding di-atomic measures that violate the Jensen condition (\ref{eq:jensencondition}). In particular, those measures cannot be generated by low Mach sequences.
	
\section{Preparation}\label{sect:preparation}
Let us first introduce some notation which will be used throughout the article. Note that most of these definitions are non-standard terminology.\\
For a measurable function $f\colon X\to Y$ between measure spaces and a measure $\mu$ on $X$ denote by $f_\#\mu$ the pushforward measure on $Y$.\\
We define the following lifting maps:
\begin{itemize}
	\item $S\colon (u,P)\mapsto \left(1,u,u\ocircle u,P+\frac{|u|^2}{d}\right)$,
	\item $\Theta^{\eps}\colon (\rho,u)\mapsto \left(\rho,\rho u,\rho u\ocircle u,\frac{1}{\eps}\rho^\gamma+\rho\frac{|u|^2}{d}\right)$,
	\item $C_{\bar{\rho}}^{\eps}\colon (\rho,u)\mapsto \left(\rho,\rho u,\rho u\ocircle u,\frac{1}{\eps}(\rho^\gamma-{\bar{\rho}}^{\gamma})+\rho\frac{|u|^2}{d}\right)$,
	\item $T_{\bar{\rho}}^{\eps}\colon (\rho,u)\mapsto \left(\rho,u,\frac{1}{\eps \bar{\rho}}(\rho^{\gamma}-\bar{\rho}^{\gamma})\right)$,
	\item $P_{\bar{\rho}}^{\eps}\colon (\rho,u)\mapsto \left(u,\frac{1}{\eps \bar{\rho}}(\rho^\gamma-\bar{\rho}^{\gamma})\right)$,
\end{itemize}
where $\bar{\rho}>0$ will denote the constant density of the incompressible limit system. Here, we defined $v\ocircle v:=v\otimes v-\frac{|v|^2}{d}\mathbb{E}_d$, where $(v\otimes w)_{i,j}=v_iw_j$ is the rank-one matrix corresponding to $v,w\in \R^d$. The letters $S$, $C$, and $T$ have no intentional meaning in the above definition, the notation $\Theta$ comes from \cite{GW21}, and the map $P$ in the fifth bullet point stands for the \textit{pressure lift}.\\
In the following, the (space) dimension will be denoted by $d\in\N$. Further, set $N:=\dim(\R^+\times \R^d\times S_0^d\times \R)$, which is the dimension of the state space of the relaxed Euler system (\ref{eq:relaxedeulersystem}), see below. Here, $S_0^d$ is the space of symmetric trace-free $(d\times d)$-matrices. Also, $T>0$ will always denote some fixed time.\\
\\
Denote by $\mathcal{P}(\R^m)$ the set of probability measures over $\R^m$ and denote by $L_{\operatorname{w}}^{\infty}\left(\Omega,\mathcal{P}(\R^m)\right)$ the set of weakly*-measurable maps from an open set $\Omega\subset \R^d$ into the probability measures. Elements of $L_{\operatorname{w}}^{\infty}\left(\Omega,\mathcal{P}(\R^m)\right)$ will be called Young measures. Let us also introduce the following notion of convergence tailored to Young measures:\\
A sequence $(z_n)$ of measurable functions generates a Young measure $\nu$ if
\begin{align*}
	f(z_n(\bullet))\overset{*}{\rightharpoonup}\int\limits_{\R^m}^{}f(z)\dd\nu_{(\bullet)}(z)=:\left\langle\nu_{(\bullet)},f\right\rangle\text{ in }L^{\infty}(\Omega)
\end{align*}
for all $f\in C_0(\R^m)$. We will denote this by $z_n\overset{Y}{\rightharpoonup}\nu$. Here, $C_0(\R^m)$ denotes the space of all continuous functions that vanish at infinity, which is obtained by taking the closure of the space of compactly supported continuous functions on $\R^m$ with respect to the supremum norm.\\
Similarly, we say that a sequence of Young measures $\nu_n$ converges to $\nu$ in the sense of Young measures if
\begin{align*}
	\langle\nu_n,f\rangle\overset{*}{\rightharpoonup}\langle\nu,f\rangle\text{ in }L^{\infty}(\Omega)
\end{align*}
for all $f\in C_0(\R^m)$. This is again denoted by $\nu_n\overset{Y}{\rightharpoonup}\nu$.
\subsection*{Measure-valued solutions of the isentropic Euler system}
Let $\gamma>1$ be the adiabatic exponent. We start by clarifying the notion of measure-valued solution corresponding to the isentropic Euler system (\ref{eq:isentropiceulereps}).
\begin{defi}
	We call a Young measure $\nu\in L^{\infty}_{\operatorname{w}}\left((0,T)\times \T^d,\mathcal{P}(\R^+\times \R^d)\right)$ a measure-valued solution of (\ref{eq:isentropiceulereps}) with Mach number $\sqrt{\eps}$ and initial data $(\rho_0,u_0)\in L^1(\T^d)$ if
	\begin{equation}
		\begin{aligned}
			\int\limits_{0}^{T}\int\limits_{\T^d}^{}\partial_t\varphi\cdot \langle\nu,\rho u\rangle+\nabla\varphi\,:\,\langle\nu,\rho u\otimes u\rangle+\left\langle\nu,\frac{1}{\eps}\rho^{\gamma}\right\rangle\operatorname{div}\varphi\,\dx\dt&=-\int\limits_{\T^d}^{}\varphi(0,\bullet)\cdot\rho_0u_0\,\dx,\\
			\int\limits_{0}^{T}\int\limits_{\T^d}^{}\langle\nu,\rho\rangle\partial_t\psi+\nabla\psi\cdot \langle\nu,\rho u\rangle\dx\dt&=-\int\limits_{\T^d}^{}\rho_0\psi(0,\bullet)\dx\label{eq:mvsisentropic}
		\end{aligned}
	\end{equation}
	for all $\varphi\in C_c^{\infty}\left([0,T)\times \T^d,\R^d\right)$ and $\psi\in C_c^{\infty}\left([0,T)\times \T^d\right)$. Note that the above integrals are assumed to exist as part of the definition.\\
	If $\nu=\delta_{(\rho,u)}$ for some $(\rho,u)\in L^{1}\left((0,T)\times \T^d,\R^+\times \R^d\right)$ we call $\nu$, or simply $(\rho,u)$, a weak solution of (\ref{eq:isentropiceulereps}).
\end{defi}
Define the energy density function corresponding to the Mach number $\sqrt{\eps}$ by
\begin{align*}
	e_{\eps}\colon (\rho,u)\mapsto \frac{1}{2}\rho|u|^2+\frac{1}{\gamma-1}\frac{1}{\eps}\rho^{\gamma}.
\end{align*}
\begin{defi}
	Let $\nu$ be a measure-valued solution of (\ref{eq:isentropiceulereps}) with initial data $(\rho_0,u_0)\in L^1(\T^d)$. We call $\nu$ energy admissible if
	\begin{align*}
		\underset{t\in(0,T)}{\esssup}\int\limits_{\T^d}^{}\left\langle\nu_{(t,x)},e_{\eps}\right\rangle\dx\leq \int\limits_{\T^d}^{}e_{\eps}(\rho_0(x),u_0(x))\dx<\infty.
	\end{align*}
\end{defi}
\subsection*{Measure-valued solutions of the incompressible Euler system}
For the case of the incompressible Euler system we give two different ways to define measure-valued solutions depending on the space of test functions under consideration. The classical notion of measure-valued solution has been first introduced by DiPerna, cf. \cite{D85}. Note, that we constrain ourselves to oscillation measures and also assume a certain boundedness in the $u$ variable of our measures.
\begin{defi}\label{defi:mvsdefinitions}
	\begin{enumerate}
		\item[(i)] \label{a}A Young measure $\nu\in L^{\infty}_{\operatorname{w}}\left((0,T)\times \T^d,\mathcal{P}(\R^d)\right)$ is called a classical measure-valued solution of (\ref{eq:incompressibleeuler}) with initial data $u_0\in L^1(\T^d)$ if $\langle\nu,|u|^2\rangle\in L^{\infty}\left((0,T)\times \T^d\right)$ and
		\begin{align*}
			\int\limits_{0}^{T}\int\limits_{\T^d}^{}\partial_t\varphi\cdot\langle\nu,u\rangle+\nabla\varphi:\langle\nu,u\otimes u\rangle\dx\dt&=-\int\limits_{\T^d}^{}\varphi(0)\cdot u_0\,\dx,\\
			\int\limits_{\T^d}^{}\nabla\psi\cdot\langle\nu,u\rangle\dx&=0
		\end{align*}
		for all $\varphi\in C_c^{\infty}\left([0,T)\times \T^d,\R^d\right)$ with $\operatorname{div}\varphi=0$ and for all $\psi\in C_c^{\infty}(\T^d)$.
		\item[(ii)] \label{b}A Young measure $\mu\in L^{\infty}_{\operatorname{w}}\left((0,T)\times \T^d,\mathcal{P}(\R^d\times \R)\right)$ is called an augmented measure-valued solution of (\ref{eq:incompressibleeuler}) with initial data $u_0\in L^1(\T^d)$ if $\langle\mu,|u|^2\rangle\in L^{\infty}\left((0,T)\times \T^d\right)$ and
		\begin{align*}
			\int\limits_{0}^{T}\int\limits_{\T^d}^{}\partial_t\varphi\cdot\langle\mu,u\rangle+\nabla\varphi:\langle\mu,u\otimes u\rangle+\langle\mu,P\rangle\operatorname{div}\varphi\,\dx\dt&=-\int\limits_{\T^d}^{}\varphi(0)\cdot u_0\,\dx,\\
			\int\limits_{\T^d}^{}\nabla\psi\cdot\langle\mu,u\rangle\dx&=0
		\end{align*}
		for all $\varphi\in C_c^{\infty}\left([0,T)\times \T^d,\R^d\right)$ and for all $\psi\in C_c^{\infty}(\T^d)$.\\
		We use the word ``augmented'' here, since $\mu$ lives on the state space $\left\{(u,P)\in \R^d\times \R\right\}$, which is augmented by the pressure variable compared to the state space $\left\{u\in \R^d\right\}$ of the classical measure-valued solutions.
	\end{enumerate}
\end{defi}
It can be shown that $\langle\nu,u\rangle$ is weakly continuous in time if $\nu$ is a measure-valued solution, cf. Appendix A in \cite{DS10}. Hence, $u_0$ is necessarily divergence-free.\\
As for the isentropic Euler system, measure-valued solutions $\nu=\delta_{(u,P)}$ concentrated on a single pair of functions $(u,P)\in L^{1}\left((0,T)\times \T^d,\R^d\times \R\right)$ are called \textit{weak solutions} of (\ref{eq:incompressibleeuler}). Note that for weak solutions the definitions of classical and augmented measure-valued solutions are equivalent. Therefore, it suffices to consider only the velocity $u$. The corresponding average-free pressure is then obtained from the formula $P=(-\Delta)^{-1}\operatorname{div}\operatorname{div}(u\otimes u)$. In this case we also simply say that $u$ is a weak solution of (\ref{eq:incompressibleeuler}).\\
Let us now compare the two notions of solution in Definition~\ref{defi:mvsdefinitions}.
\begin{lem}\label{lem:mvsdefinitionscomparison}
	If $\nu$ is a classical measure-valued solution, then every Young measure $\mu$ on $\R^d\times \R$ extending $\nu$ in the sense that
	\begin{align*}
		\langle \operatorname{pr}_u(\mu),f\rangle:=\langle \mu,f\circ \operatorname{pr}_u\rangle\equiv \langle\nu,f\rangle\text{ for all }f\in C(\R^d)
	\end{align*}
	is an augmented measure-valued solution if
	\begin{align}
		\langle\mu,P\rangle=(-\Delta)^{-1}\operatorname{div}\operatorname{div}\langle\nu,u\otimes u\rangle.\label{eq:pressurelagrangemultiplieridentity}
	\end{align}
	In particular, the Young measure $\nu\otimes \delta_{(-\Delta)^{-1}\operatorname{div}\operatorname{div}\langle\nu, u\otimes u\rangle}$ is an augmented solution.\\
	On the other hand, if $\mu$ is an augmented measure-valued solution, then $\operatorname{pr}_u(\mu)$ is a classical measure-valued solution.
\end{lem}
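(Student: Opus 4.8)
The plan is to pass between the two solution concepts by Helmholtz--Leray decomposing the test functions and observing that the prescribed pressure formula \eqref{eq:pressurelagrangemultiplieridentity} is exactly the pressure Poisson equation, so that the pressure term is forced to absorb the gradient part of the momentum balance.

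The converse implication is immediate. Writing $\nu:=\operatorname{pr}_u(\mu)$, the marginal property gives $\langle\mu,u\rangle=\langle\nu,u\rangle$, $\langle\mu,u\otimes u\rangle=\langle\nu,u\otimes u\rangle$ and $\langle\nu,|u|^2\rangle=\langle\mu,|u|^2\rangle\in L^\infty$. Restricting the augmented momentum identity to divergence-free $\varphi$ annihilates the term $\langle\mu,P\rangle\operatorname{div}\varphi$, and the remaining identity, together with the (unchanged) incompressibility constraint, is precisely the definition of a classical measure-valued solution. Likewise the ``in particular'' assertion is just the main implication applied to $\mu=\nu\otimes\delta_{P^\ast}$ with $P^\ast=(-\Delta)^{-1}\operatorname{div}\operatorname{div}\langle\nu,u\otimes u\rangle$, for which $\operatorname{pr}_u(\mu)=\nu$ and $\langle\mu,P\rangle=P^\ast$, so that \eqref{eq:pressurelagrangemultiplieridentity} holds by construction.

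For the main implication I first record that, since $\mu$ extends $\nu$, all the $u$-moments appearing in the equations coincide with those of $\nu$, so only the pressure term is genuinely new. Given an arbitrary $\varphi\in C_c^\infty([0,T)\times\T^d,\R^d)$ I would decompose it pointwise in time as $\varphi=\varphi^{\mathrm{df}}+\nabla q$ with $\varphi^{\mathrm{df}}=\mathbb{P}\varphi$ the Leray projection and $q=(-\Delta)^{-1}\operatorname{div}\varphi$. Because $\mathbb{P}$ is a purely spatial Fourier multiplier, $\varphi^{\mathrm{df}}$ is again smooth, compactly supported in $[0,T)$ and divergence-free, hence admissible in the classical momentum equation, where the pressure term is absent. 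By linearity of both sides of the augmented identity in $\varphi$ it therefore remains to verify that identity for the pure gradient $\nabla q$. Here the three contributions are handled as follows: the time-derivative term $\int_0^T\!\int_{\T^d}\nabla\partial_t q\cdot\langle\nu,u\rangle\,\dxdt$ vanishes because $\operatorname{div}\langle\nu,u\rangle=0$ for a.e.\ $t$ (the constraint tested against $\partial_t q(t,\cdot)$); integrating by parts twice on the torus and using $\Delta(-\Delta)^{-1}g=-g$ for mean-zero $g$ turns the pressure term $\int_{\T^d}\langle\mu,P\rangle\Delta q$ into $-\int_{\T^d}\langle\nu,u\otimes u\rangle:\nabla^2 q$, which exactly cancels the convective term $\int_{\T^d}\nabla^2 q:\langle\nu,u\otimes u\rangle$; and the initial datum contributes $-\int_{\T^d}\nabla q(0)\cdot u_0\,\dx=\int_{\T^d}q(0)\operatorname{div}u_0\,\dx=0$, since $u_0$ is divergence-free. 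Thus the gradient part produces the trivial identity and the augmented momentum equation holds for all $\varphi$; the incompressibility constraint is inherited unchanged from $\nu$.

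The one step requiring genuine care, rather than conceptual novelty, is the justification of these manipulations at the stated regularity: from $\langle\nu,|u|^2\rangle\in L^\infty$ one obtains $\langle\nu,u\otimes u\rangle\in L^\infty((0,T)\times\T^d)$, so that $\langle\mu,P\rangle=(-\Delta)^{-1}\operatorname{div}\operatorname{div}\langle\nu,u\otimes u\rangle$ is well defined and lies in every $L^p$, $1<p<\infty$ (the operator being a Calder\'on--Zygmund multiplier on $\T^d$). This integrability is what legitimizes the integrations by parts and the use of self-adjointness of $(-\Delta)^{-1}$ on mean-zero functions, while the remaining measurability points follow routinely from the extension property.
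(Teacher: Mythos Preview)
Your proof is correct and follows essentially the same approach as the paper: the paper's proof is very terse, invoking ``standard techniques using the Helmholtz decomposition of the space of test functions'' together with the Calder\'on--Zygmund bound on $\langle\mu,P\rangle$, and you have simply spelled out these standard steps in detail. The handling of the ``in particular'' assertion and of the converse implication likewise matches the paper's argument.
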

\begin{proof}
	Let $\nu$ be a classical measure-valued solution. Then every extension of $\nu$ satisfying (\ref{eq:pressurelagrangemultiplieridentity}) is an augmented solution. This follows by standard techniques using the Helmholtz decomposition of the space of test functions. We also note here that due to a Calder\'on-Zygmund argument one can see from (\ref{eq:pressurelagrangemultiplieridentity}) that $\langle\mu,P\rangle\in L^1\left((0,T)\times \T^d\right)$ since $\langle\nu, u\otimes u\rangle\in L^{\infty}\left((0,T)\times \T^d\right)$. Moreover, the Young measure $\nu\otimes \delta_{(-\Delta)^{-1}\operatorname{div}\operatorname{div}\langle\nu, u\otimes u\rangle}$ clearly extends $\nu$ and fulfills
	\begin{align*}
		\left\langle \nu\otimes \delta_{(-\Delta)^{-1}\operatorname{div}\operatorname{div}\langle\nu, u\otimes u\rangle},P\right\rangle=\left\langle \delta_{(-\Delta)^{-1}\operatorname{div}\operatorname{div}\langle\nu, u\otimes u\rangle},P\right\rangle=(-\Delta)^{-1}\operatorname{div}\operatorname{div}\langle\nu, u\otimes u\rangle.
	\end{align*}
	This proves the first part of the lemma.\\
	Now let $\mu$ be an augmented measure-valued solution, then testing against divergence-free functions yields that $\operatorname{pr}_u(\mu)$ is a classical measure-valued solution.
\end{proof}
\begin{rem}
	The above lemma implies that there is an injection of the space of classical measure-valued solutions into the space of augmented measure-valued solutions by identifying those via $\nu\mapsto \nu\otimes\delta_{(-\Delta)^{-1}\operatorname{div}\operatorname{div}\langle\nu, u\otimes u\rangle}$. Hence, augmented measure-valued solutions are a more general solution concept. Note here that an augmented measure-valued solution determines the corresponding classical measure-valued solution completely, while in the other direction by (\ref{eq:pressurelagrangemultiplieridentity}) only the first moment in the $P$ variable is fixed for the augmented solution. Thus, in a sense, augmented solutions contain more information than classical measure-valued solutions. Especially, we will be able to formulate the low Mach limit process within the framework of augmented solutions.\\
\end{rem}
\begin{rem}
	In particular, from Lemma~\ref{lem:mvsdefinitionscomparison} we infer that an augmented measure-valued solution $\mu$ can be identified with a classical measure-valued solution if $\mu=\nu\otimes \delta_{(-\Delta)^{-1}\operatorname{div}\operatorname{div}\langle \nu, u\otimes u\rangle}$ for some Young measure $\nu$. Note that this condition is non-local. On the other hand, if one can guarantee that the augmented solution $\mu$ is atomic in the pressure variable, it necessarily has to be of this form.
\end{rem}
Since the internal pressure does not contribute to the total energy in the incompressible case, we define the energy density function by
\begin{align*}
	e\colon u\mapsto \frac{1}{2}|u|^2.
\end{align*}
In the following definition of energy admissibility it suffices to treat only the case of classical measure-valued solutions, as two augmented solutions corresponding to the same classical measure-valued solution differ only in the pressure component
.
\begin{defi}
	Let $\nu$ be a measure-valued solution with initial data $u_0\in L^2(\T^d)$. We call $\nu$ energy admissible if
	\begin{align*}
		\underset{t\in(0,T)}{\esssup}\int\limits_{\T^d}^{}\left\langle\nu_{(t,x)},e\right\rangle\dx\leq \int\limits_{\T^d}^{}e(u_0(x))\dx.
	\end{align*}
\end{defi}
Let us also observe an important weak-strong uniqueness property of augmented solutions.
\begin{prop}\label{prop:wsuniqueness}
	Let $\nu$ be an energy admissible augmented measure-valued solution and let $(U,\Pi)\in C^1\left([0,T]\times \T^d\right)$ be a strong solution of (\ref{eq:incompressibleeuler}). Assume that both $\nu$ and $(U,\Pi)$ arise from the same initial datum $u_0$. Then $\nu=\delta_U\otimes \mu$ for some $\mu\in L_{\operatorname{w}}^{\infty}\left((0,T)\times \T^d,\R\right)$ acting on the space of pressures.
\end{prop}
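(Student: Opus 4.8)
The plan is to reduce the statement to the already-known weak-strong uniqueness for \emph{classical} measure-valued solutions and then disintegrate in the pressure variable. By the second assertion of Lemma~\ref{lem:mvsdefinitionscomparison}, the velocity projection $\operatorname{pr}_u(\nu)$ is a classical measure-valued solution with initial datum $u_0$. Since the incompressible energy density $e(u)=\frac12|u|^2$ depends only on the velocity, energy admissibility of $\nu$ transfers verbatim to $\operatorname{pr}_u(\nu)$. Thus $\operatorname{pr}_u(\nu)$ is an energy admissible classical measure-valued solution sharing its initial datum with the $C^1$ strong solution $U$, and I would invoke the weak-strong uniqueness principle for classical measure-valued solutions from \cite{BDS11} to conclude $\operatorname{pr}_u(\nu)=\delta_U$ almost everywhere on $(0,T)\times\T^d$.

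The mechanism behind that classical result is a relative energy estimate, which I would also record since it explains why only the velocity is pinned down. Setting $\mathcal{E}(t):=\int_{\T^d}\langle\nu_{(t)},\tfrac12|u-U(t)|^2\rangle\dx$ and testing the momentum balance for $\nu$ against the divergence-free field $U$, the pressure contribution $\langle\nu,P\rangle\operatorname{div}U$ of $\nu$ vanishes identically; symmetrically, $\int_{\T^d}\nabla\Pi\cdot\langle\nu,u\rangle\dx=0$ because $\langle\nu,u\rangle$ is divergence-free, so the strong solution's pressure $\Pi$ also drops out. Combining this with energy admissibility of $\nu$ and the (exact) energy conservation of the $C^1$ field $U$ yields the relative energy inequality $\mathcal{E}(t)\le\mathcal{E}(0)+C\int_0^t\mathcal{E}(s)\ds$ with $C=\|\nabla U\|_{L^\infty}$, using the pointwise bound $|\langle\nu,(u-U)\otimes(u-U)\rangle|\le\langle\nu,|u-U|^2\rangle$. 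Gronwall then forces $\mathcal{E}\equiv0$ as soon as $\mathcal{E}(0)=0$. To obtain $\mathcal{E}(0)=0$ I would use the weak-in-time continuity of $t\mapsto\langle\nu_{(t)},u\rangle$ with $\langle\nu_{(0)},u\rangle=u_0$: admissibility gives $\int_{\T^d}\langle\nu_{(0)},\tfrac12|u|^2\rangle\dx\le\int_{\T^d}\tfrac12|u_0|^2\dx=\int_{\T^d}\tfrac12|\langle\nu_{(0)},u\rangle|^2\dx$, and Jensen's inequality forces equality, hence zero velocity variance, i.e. $\operatorname{pr}_u(\nu_{(0)})=\delta_{u_0}$.

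Once $\mathcal{E}\equiv0$, equivalently $\langle\nu_{(t,x)},|u-U(t,x)|^2\rangle=0$ for almost every $(t,x)$, the velocity marginal of $\nu_{(t,x)}$ is the Dirac mass at $U(t,x)$. I would then apply the disintegration theorem: since $\operatorname{pr}_u(\nu_{(t,x)})=\delta_{U(t,x)}$, there is a weakly*-measurable family of conditional probability measures $\mu_{(t,x)}\in\mathcal{P}(\R)$ on the pressure variable with $\nu_{(t,x)}=\delta_{U(t,x)}\otimes\mu_{(t,x)}$. This is precisely the claimed decomposition $\nu=\delta_U\otimes\mu$, with $\mu=\operatorname{pr}_P(\nu)$ acting on the space of pressures.

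The main obstacle I expect is the rigorous handling of the relative energy argument at the measure-valued level, specifically justifying the use of the time-dependent field $U$ (which is not compactly supported at $t=T$) as a test function and controlling the resulting boundary-in-time terms through the weak-in-time continuity of the velocity moment; this is exactly the step where the cited classical result does the substantive work, which is why routing through $\operatorname{pr}_u(\nu)$ and \cite{BDS11} is the economical path. A secondary, purely technical point is the measurable selection underlying the disintegration, but this is standard for weakly*-measurable Young measures.
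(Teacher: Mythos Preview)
Your proposal is correct and follows essentially the same relative energy mechanism as the paper; the paper simply carries out the Gr\"onwall estimate directly for the augmented solution (observing, as you do, that $\langle\nu,P\rangle\operatorname{div}U=0$ and $\int\nabla\Pi\cdot\langle\nu,u\rangle\dx=0$), whereas you first project to the classical measure-valued solution and invoke \cite{BDS11}. Your disintegration step makes explicit what the paper leaves implicit in the support statement $\supp(\nu_{(t,x)})\subset\{u=U(t,x)\}$.
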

\begin{proof}
	We need to show that $\supp\left(\nu_{(t,x)} \right)\subset \{(u,P)\,:\,u=U(t,x) \}$ almost everywhere. For that, we follow the proof of a standard weak-strong uniqueness argument for classical measure-valued solutions, cf. for example Theorem~3.4 in \cite{W18}. The proof will be given here for convenience.\\
	Define the relative energy $E_{\operatorname{rel}}(t):=\frac{1}{2}\int\limits_{\T^d}^{}\left\langle\nu_{(t,x)},|u-U(t,x)|^2\right\rangle\dx$ for almost every $t\in (0,T)$. We estimate
	\begin{align*}
		E_{\operatorname{rel}}(t)&=\frac{1}{2}\int\limits_{\T^d}^{}|U(t,x)|^2+\left\langle\nu_{(t,x)},|u|^2\right\rangle-2\left\langle \nu_{(t,x)},u\right\rangle\cdot U(t,x)\dx\\
		&\leq -\int\limits_{0}^{t}\int\limits_{\T^d}^{}\partial_t U(\tau,x)\cdot\left\langle\nu_{(\tau,x)},u\right\rangle+\nabla U(\tau,x) :\left\langle\nu_{(\tau,x)},u\otimes u\right\rangle+\left\langle\nu_{(\tau,x)},P\right\rangle\operatorname{div} U(\tau,x)\dx\dd\tau\\
		&=\int\limits_{0}^{t}\int\limits_{\T^d}^{}\big(\operatorname{div}(U\otimes U)(\tau,x)+\nabla \Pi(\tau,x)\big)\cdot\left\langle\nu_{(\tau,x)},u\right\rangle-\nabla U(\tau,x) :\left\langle\nu_{(\tau,x)},u\otimes u\right\rangle\dx\dd\tau\\
		&=\int\limits_{0}^{t}\int\limits_{\T^d}^{}\left\langle \nu_{(\tau,x)},(U(\tau,x)-u)\cdot\nabla U(\tau,x)(u-U(\tau,x))\right\rangle\dx\dd\tau\\
		&\leq 2\int\limits_{0}^{t}\|\nabla U(\tau,\cdot)\|_{L^{\infty}(\T^d)}E_{\operatorname{rel}(\tau)}\dd\tau.
	\end{align*}
	Here, we used in the second line the energy (in)equality for $\nu$ and $(U,\Pi)$. In the fourth line we used the identities $\operatorname{div}(U\otimes U)=U\cdot\nabla U$ and $\nabla U : (u\otimes u)=u\cdot \nabla U\cdot u$. Moreover, the fact that $\int\limits_{\T^d}^{}\left\langle\nu_{(\tau,x)},(U(\tau,x)-u)\right\rangle\cdot (U\cdot\nabla U)\dx=0$ holds for almost every $\tau$ has also been applied. By Grönwall's inequality we infer that $E_{\operatorname{rel}}(t)=0$ for almost every $t$ and hence $\nu$ is atomic in the $u$ variable.
\end{proof}
\begin{rem}
	The weak-strong uniqueness result above shows that the velocity component of $\nu$ is fixed and equal to the strong solution's velocity. On the other hand the pressure variable cannot be uniquely determined. For that observe that if $(U,\Pi)$ is a strong solution, then for example both $\delta_U\otimes \delta_{\Pi}$ and $\delta_U\otimes \left(\frac{1}{2}\delta_{4\Pi}+\frac{1}{2}\delta_{-2\Pi} \right)$ are energy admissible augmented measure-valued solutions to the same initial data. In fact, the latter measures are also admissible with respect to the local energy inequality. So, there exists an inherent non-uniqueness, at least in the pressure, for augmented solutions.
\end{rem}
We formulate the incompressible Navier-Stokes system with viscosity parameter $\alpha>0$ as
\begin{align*}
	\partial_tu+\operatorname{div}(u\otimes u)+\nabla P&=\alpha\Delta u,\\
	\operatorname{div}u&=0.
\end{align*}
Weak solutions of this system may be defined as in Definition~\ref{defi:mvsdefinitions}. In this case also for the Navier-Stokes system the pressure is already fully determined by the velocity. If $(\alpha_n)$ is a null sequence of viscosity parameters and if $u_n$ is a weak solution of the incompressible Navier-Stokes system corresponding to $\alpha_n$ for all $n\in \N$, then $u_n$ is called a \textit{vanishing viscosity sequence}. Such a sequence is said to be energy admissible if for all $n\in \N$
\begin{align*}
	\underset{t\in(0,T)}{\esssup}\left(\int\limits_{\T^d}^{}e(u_n(t,x))\dx+\int\limits_{0}^{t}\int\limits_{\T^d}^{}\alpha|\nabla u_n(\tau,x)|^2\dd\tau\dx\right)\leq \int\limits_{\T^d}^{}e(u_n^0(x))\dx<\infty
\end{align*}
with $e(u)=\frac{1}{2}|u|^2$ as for the incompressible Euler equations, where $u_n^0$ is the initial data corresponding to $u_n$.
\subsection*{The relaxed Euler system and homogeneous differential operators}
As an auxiliary tool we consider the relaxed Euler system
\begin{equation}
	\begin{aligned}
		\partial_t m+\operatorname{div}M+\nabla Q&=0,\\
		\partial_t \rho+\operatorname{div}m&=0\label{eq:relaxedeulersystem}
	\end{aligned}
\end{equation}
for the variables $(\rho,m,M,Q)\in \R\times \R^d\times S_0^d\times \R$. This can be viewed as a linearized version of the Euler equations, both isentropic or incompressible, extended by additional variables which substitute the nonlinear terms. One can easily check that if $(\rho,u)$ is a weak solution of (\ref{eq:isentropiceulereps}) with Mach number $\sqrt{\eps}$ then $\Theta^{\eps}(\rho,u)$ is a distributional solution of (\ref{eq:relaxedeulersystem}). Note that the relaxed Euler system does not depend on the Mach number $\sqrt{\eps}$, since we simply replace the whole nonlinear generalized pressure term $\frac{1}{\eps}\rho^\gamma+\rho\frac{|u|^2}{d}$ by the variable $Q$. Similarly, if $u$ is a weak solution of (\ref{eq:incompressibleeuler}) with corresponding pressure $P$, the lift $S(u,P)$ solves (\ref{eq:relaxedeulersystem}) also in the sense of distributions.\\
\\
The relaxed Euler system (\ref{eq:relaxedeulersystem}) may be reformulated in a more abstract sense as a linear homogeneous differential operator $\mathcal{A}_E$ of order one with constant coefficients. Here, the subscript indicates the correspondence to the Euler system.\\
In general, a \textit{linear homogeneous differential operator} is an operator $\mathcal{A}$ of the form $\mathcal{A}=\sum\limits_{|\alpha|=k}^{}A^{\alpha}\partial_{\alpha}$, where $A^{\alpha}$ are constant coefficient matrices acting on $\R^m$ and $k$ denotes the order of homogeneity of $\mathcal{A}$. Following \cite{R18}, we say that a linear homogeneous differential operator $\mathcal{B}$ over $\R^d$ is a \textit{potential} for $\mathcal{A}$ if
\begin{align*}
	\ker\mathbb{A}(\xi)=\image\mathbb{B}(\xi)
\end{align*}
for all $0\neq \xi\in \R^d$, where $\mathbb{A}$ and $\mathbb{B}$ are the corresponding Fourier symbols to $\mathcal{A}$ and $\mathcal{B}$, respectively. Furthermore, we say that two vectors $z_1,z_2\in \R^m$ are $\mathcal{A}$\textit{-wave-cone-connected} if $(z_1-z_2)\in \underset{|\xi|=1}{\bigcup}\ker \mathbb{A}(\xi)$.\\
For the rest of the paper we assume that $\mathcal{A}_E$ admits a potential $\mathcal{B}_E$ over $\R^{d+1}$ of order two. This fact has been shown in Lemma~5.6 in \cite{GW21} in the case $d=2$ and is presumably also correct in higher dimensions. It is straightforward to check that $\mathcal{A}_E$ has constant rank, i.e. the rank of its Fourier symbol $\mathbb{A}_E(\xi)$ is constant for all $\xi\neq 0$, cf. Lemma~1 in \cite{CFKW15}.\\
\\
The following concepts have been introduced in \cite{FM99}.
\begin{defi}
	For a linear homogeneous differential operator $\mathcal{A}$ and for $z\in\R^m$ define the $\mathcal{A}$\textit{-quasiconvex envelope} for $f\in C(\R^m)$ on $\T^d$ by
	\begin{align*}
		Q_{\mathcal{A}}f(z):=\inf\left\{\underset{\T^d}{\dashint}f(z+\varphi(x))\dx\,:\,\varphi\in C^{\infty}(\T^d)\cap \ker(\mathcal{A}),\ \underset{\T^d}{\dashint}\varphi\,\dx=0 \right\}.
	\end{align*}
	and for $q>0$ define a truncated version of that by
	\begin{align*}
		Q_{\mathcal{A}}^qf(z):=\inf\left\{\underset{\T^d}{\dashint}f(z+\varphi(x))\dx\,:\,\varphi\in C^{\infty}(\T^d)\cap \ker(\mathcal{A}),\ \underset{\T^d}{\dashint}\varphi\,\dx=0,\textit{ and }\|\varphi\|_{L^{\infty}(\T^d)}\leq q \right\}.
	\end{align*}
\end{defi}
If $\mathcal{A}$ admits a potential operator $\mathcal{B}$, the $\mathcal{A}$-quasiconvex envelope can be rewritten for all $z\in\R^m$ and $f\in C(\R^m)$ as
\begin{align*}
	Q_{\mathcal{A}}f=\inf\left\{\int\limits_{(0,1)^d}^{}f(z+\mathcal{B}w(x))\dx\,:\,w\in C_c^{\infty}\big((0,1)^d\big) \right\},
\end{align*}
cf. Corollary~1 in \cite{R18}. Inspired by this, a truncated version of the potential formulation of the quasiconvex envelope has been defined in \cite{GW21} by
\begin{align*}
	Q_{\mathcal{B}}^qf(z):=\inf\left\{\int\limits_{(0,1)^d}^{}f(z+\mathcal{B}w(x))\dx\,:\,w\in C_c^{\infty}\big((0,1)^d\big),\ \|D^lw\|_{L^{\infty}\left((0,1)^d\right)}\leq q \right\}
\end{align*}
for $q>0$, $z\in \R^m$, and $f\in C(\R^m)$, where $l$ is the order of $\mathcal{B}$. It follows directly from the definition that for all $f\in C(\R^m)$ and $q>0$ we have
\begin{align*}
	 Q_{\mathcal{A}}f\leq Q_{\mathcal{A}}^qf\leq Q_{\mathcal{B}}^{C_{\mathcal{B}}q}f\leq f
\end{align*}
for some constant $C_{\mathcal{B}}$ depending only on the coefficient matrices of $\mathcal{B}$.

\section{Limits of low Mach sequences}\label{sect:limitsoflowmachsequences}
We begin our investigation of low Mach limits by the observation that under certain assumptions on a low Mach sequence of measure-valued solutions the limit 
is an augmented measure-valued solution of the incompressible Euler system. 
We also specify in what sense a low Mach limit measure is generated.
\begin{theo}\label{theo:measuresequenceimpliessolution}
	Let $(\nu^{\eps})_{\eps>0}$ be a family of Young measures with the following properties:
	\begin{itemize}
		\item The measures $\nu^{\eps}$ solve (\ref{eq:mvsisentropic}) with initial data $(\rho_0^{\eps},u_0^{\eps})\in L^1(\T^d)$.
		\item There exist $R>\eta>0$ and a compact set $K\subset \R^d\times \R$ such that $\supp\left((T_{\bar{\rho}}^{\eps})_{\#}\nu^{\eps}\right)\subset [\eta,R]\times K$.
		\item As $\eps$ goes to zero, suppose that the initial data $(\rho^{\eps}_0,u^{\eps}_0)$ converge weakly in $L^1$ to $(\bar{\rho},u_0)$, where $0<\bar{\rho}\equiv \text{const.}$ and $\operatorname{div}u_0=0$.
	\end{itemize}
	Then there exists a subsequence $\nu^{\eps_k}$ such that the pressure lifts $(P_{\bar{\rho}}^{\eps_k})_{\#}\nu^{\eps_k}$ generate an augmented measure-valued solution $\mu$ of (\ref{eq:incompressibleeuler}) with initial data $u_0$.\\
	If the measure-valued solutions $\nu^{\eps}$ are also energy admissible and the initial data $(\rho^{\eps}_0,u^{\eps}_0)$ converge strongly in $L^{\gamma}\times L^2$. Then additionally we obtain that $\mu$ is energy admissible.
\end{theo}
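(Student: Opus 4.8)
The plan is to combine the uniform compactness furnished by the support bound with a direct passage to the limit in the isentropic system \eqref{eq:mvsisentropic}, the only genuinely delicate point being the singular pressure term.

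\textbf{Compactness and identification of the limit.} First I would set $\lambda^{\eps}:=(T^{\eps}_{\bar{\rho}})_{\#}\nu^{\eps}$, a family of Young measures on $\R^+\times\R^d\times\R$ whose supports all lie in the fixed compact set $[\eta,R]\times K$. By weak-* compactness of $L^{\infty}_{\operatorname{w}}$ with uniformly compactly supported values there is a subsequence $\eps_k\to0$ with $\lambda^{\eps_k}\overset{Y}{\rightharpoonup}\lambda$. Since $(u,\tfrac{1}{\eps\bar{\rho}}(\rho^{\gamma}-\bar{\rho}^{\gamma}))\in K$ forces $|\rho^{\gamma}-\bar{\rho}^{\gamma}|\le\eps\bar{\rho}\,C_K$ on the support while $\rho\in[\eta,R]$ stays away from $0$, the density collapses: $\rho\to\bar{\rho}$ uniformly on $\supp\lambda^{\eps_k}$. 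Testing against any $g(\rho)\in C(\R)$ then gives $\langle\lambda,g(\rho)\rangle=g(\bar{\rho})$ a.e., so the $\rho$-marginal of $\lambda$ is $\delta_{\bar{\rho}}$ and hence $\lambda=\delta_{\bar{\rho}}\otimes\mu$, where $\mu$ is the $(u,P)$-marginal of $\lambda$, i.e.\ the Young-measure limit of the projected pressure lifts $(P^{\eps_k}_{\bar{\rho}})_{\#}\nu^{\eps_k}$. The bound $\langle\mu,|u|^2\rangle\in L^{\infty}$ is immediate from the compact support.

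\textbf{Passage to the limit (the main step).} As every integrand below is continuous and bounded on the common compact support, Young-measure convergence upgrades to $\langle\lambda^{\eps_k},F\rangle\overset{*}{\rightharpoonup}\langle\lambda,F\rangle$ in $L^{\infty}$ for each fixed continuous $F$, and $\lambda=\delta_{\bar{\rho}}\otimes\mu$ yields $\langle\nu^{\eps_k},\rho\rangle\to\bar{\rho}$, $\langle\nu^{\eps_k},\rho u\rangle\to\bar{\rho}\langle\mu,u\rangle$ and $\langle\nu^{\eps_k},\rho u\otimes u\rangle\to\bar{\rho}\langle\mu,u\otimes u\rangle$. The crucial term is the pressure: writing
\[
\left\langle\nu^{\eps_k},\tfrac{1}{\eps_k}\rho^{\gamma}\right\rangle=\bar{\rho}\left\langle\nu^{\eps_k},\tfrac{1}{\eps_k\bar{\rho}}(\rho^{\gamma}-\bar{\rho}^{\gamma})\right\rangle+\tfrac{\bar{\rho}^{\gamma}}{\eps_k},
\]
the genuinely singular constant $\bar{\rho}^{\gamma}/\eps_k$ is annihilated after integration because $\int_{\T^d}\operatorname{div}\varphi\,\dx=0$ on the torus, while the first summand is $\bar{\rho}\langle\lambda^{\eps_k},P\rangle\to\bar{\rho}\langle\mu,P\rangle$. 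Inserting these limits into \eqref{eq:mvsisentropic} (the initial term converging to $\bar{\rho}\int_{\T^d}\varphi(0)\cdot u_0\,\dx$ by the initial-data hypothesis, where constancy of $\bar{\rho}$ reduces matters to $(\rho_0^{\eps}-\bar{\rho})u_0^{\eps}\rightharpoonup0$) produces the momentum identity of Definition~\ref{defi:mvsdefinitions}(ii) multiplied throughout by $\bar{\rho}$; dividing by $\bar{\rho}$ gives it exactly. For the continuity equation the same limits give $\int_0^T\int_{\T^d}\bar{\rho}\partial_t\psi+\bar{\rho}\nabla\psi\cdot\langle\mu,u\rangle\,\dxdt=-\bar{\rho}\int_{\T^d}\psi(0)\,\dx$; the $\partial_t\psi$ integral equals the initial term and cancels it, leaving $\int_0^T\int_{\T^d}\nabla\psi\cdot\langle\mu,u\rangle\,\dxdt=0$, and choosing $\psi(t,x)=\chi(t)\phi(x)$ localizes this in time to $\int_{\T^d}\nabla\phi\cdot\langle\mu,u\rangle\,\dx=0$ for a.e.\ $t$. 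Hence $\mu$ is an augmented measure-valued solution with datum $u_0$.

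\textbf{Energy admissibility.} Here the obstacle is the singular internal energy $\tfrac{1}{\eps}H(\rho)$ with $H(\rho)=\tfrac{1}{\gamma-1}\rho^{\gamma}$. I would renormalise by the relative internal energy $H(\rho|\bar{\rho}):=H(\rho)-H(\bar{\rho})-H'(\bar{\rho})(\rho-\bar{\rho})\ge0$ (convexity of $H$), so that $\tfrac{1}{\eps}\langle\nu^{\eps},H(\rho)\rangle=\tfrac{1}{\eps}\langle\nu^{\eps},H(\rho|\bar{\rho})\rangle+\tfrac{1}{\eps}[H(\bar{\rho})+H'(\bar{\rho})(\langle\nu^{\eps},\rho\rangle-\bar{\rho})]$. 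The affine bracket integrates, via mass conservation $\int_{\T^d}\langle\nu^{\eps},\rho\rangle\,\dx=\int_{\T^d}\rho_0^{\eps}\,\dx$ (the $\psi\equiv1$ instance of the continuity equation), to its value at $t=0$ and cancels the identical term in the initial energy. Dropping the nonnegative $\tfrac{1}{\eps}\langle\nu^{\eps},H(\rho|\bar{\rho})\rangle$ on the left leaves $\int_{\T^d}\langle\nu^{\eps},\tfrac12\rho|u|^2\rangle\,\dx\le\int_{\T^d}\tfrac12\rho_0^{\eps}|u_0^{\eps}|^2\,\dx+\tfrac{1}{\eps}\int_{\T^d}H(\rho_0^{\eps}|\bar{\rho})\,\dx$. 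The left side converges (again by $\lambda=\delta_{\bar{\rho}}\otimes\mu$) to $\bar{\rho}\langle\mu,e\rangle$; on the right, strong $L^{\gamma}\times L^2$ convergence sends the kinetic part to $\bar{\rho}\int_{\T^d}e(u_0)\,\dx$ and forces the well-prepared singular term $\tfrac{1}{\eps}\int_{\T^d}H(\rho_0^{\eps}|\bar{\rho})\,\dx\to0$. Passing the $t$-dependent inequality to the limit against a nonnegative time weight $\chi$ (to accommodate the $\esssup$) and dividing by $\bar{\rho}$ yields $\esssup_{t}\int_{\T^d}\langle\mu_{(t,x)},e\rangle\,\dx\le\int_{\T^d}e(u_0)\,\dx$.

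The main obstacle is the singular pressure term in the momentum equation: everything hinges on splitting off the constant $\bar{\rho}^{\gamma}/\eps$ and using the periodicity identity $\int_{\T^d}\operatorname{div}\varphi\,\dx=0$ to discard it, leaving the finite, convergent quantity $\bar{\rho}\langle\mu,P\rangle$. The energy statement then requires the analogous renormalisation, where mass conservation cancels the affine-in-$\rho$ part and the well-preparedness hidden in the strong convergence of the initial data kills the residual singular internal energy.
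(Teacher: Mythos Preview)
Your argument is correct and in its skeleton agrees with the paper, but you take two shortcuts that are worth flagging because they differ from the paper's route.

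\textbf{Density collapse.} You obtain $\rho\to\bar\rho$ directly from the support hypothesis: since $(u,\tfrac{1}{\eps\bar\rho}(\rho^\gamma-\bar\rho^\gamma))\in K$ forces $|\rho^\gamma-\bar\rho^\gamma|\le C\eps$ on $\supp\nu^\eps$ while $\rho\in[\eta,R]$, the density converges uniformly on the supports. This is cleaner than the paper's proof of Claim~1, which argues via a Lipschitz cut-off, Cauchy--Schwarz, and the strong convexity inequality $(\rho-\bar\rho)^2\le C\big(\rho^\gamma-\gamma\bar\rho^{\gamma-1}(\rho-\bar\rho)-\bar\rho^\gamma\big)$ together with mass conservation. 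Both are valid; yours exploits the compact support bound more efficiently.

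\textbf{Energy inequality.} Here the approaches genuinely diverge. The paper subtracts only the constant $\tfrac{1}{\eps}H(\bar\rho)$, passes to the limit, and is then left with the extra term $\tfrac{\bar\rho}{\gamma-1}\int_{\T^d}\langle\mu,P\rangle\,\dx$, which it removes by showing that the limit pressure has zero spatial mean (a Calder\'on--Zygmund argument giving $\langle\mu,P\rangle$ as a divergence). You instead use the relative internal energy $H(\rho\,|\,\bar\rho)$ and mass conservation to cancel the affine part exactly, then discard the nonnegative remainder. Your route avoids the singular-integral step and is closer to the standard low-Mach relative-energy machinery.

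\textbf{One imprecision.} Your claim that strong $L^\gamma$ convergence of $\rho_0^\eps$ ``forces'' $\tfrac{1}{\eps}\int_{\T^d}H(\rho_0^\eps\,|\,\bar\rho)\,\dx\to 0$ is not correct as stated: strong convergence alone carries no rate (e.g.\ $\rho_0^\eps=\bar\rho+\sqrt{\eps}$ gives a nonzero limit). What actually saves you is the support hypothesis: it implies $|\langle\nu^\eps_{(t,x)},\rho\rangle-\bar\rho|\le C\eps$ for a.e.\ $(t,x)$, and by weak time continuity of the barycenter this propagates to $\|\rho_0^\eps-\bar\rho\|_{L^\infty}\le C\eps$, whence $H(\rho_0^\eps\,|\,\bar\rho)=O(\eps^2)$ and the singular term is $O(\eps)$. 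You should attribute the vanishing to this rate, not to the strong convergence assumption. The same observation also justifies your initial-data step $(\rho_0^\eps-\bar\rho)u_0^\eps\rightharpoonup 0$, which you state but do not argue.
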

In the situation of Theorem~\ref{theo:measuresequenceimpliessolution}, namely $(P^{\eps}_{\bar{\rho}})_{\#}\nu^{\eps}\overset{Y}{\rightharpoonup}\mu$, we say that $\mu$ is the \textit{low Mach limit} of the sequence $\nu^{\eps}$.
\begin{proof}
	The functions $\langle\nu^{\eps},\operatorname{id}\rangle\in L^{\infty}\left([0,T]\times \T^d\right)$ are weakly continuous in time by a standard argument, see Appendix~A in \cite{DS10}. Moreover, the sequence $\langle\nu^{\eps},\operatorname{id}\rangle\subset L^{\infty}$ is uniformly bounded. Hence, the sequence of initial data $(\rho^{\eps}_0,u^{\eps}_0)\subset L^{\infty}(\T^d)$ is also uniformly bounded.\\
	Since $(T_{\bar{\rho}}^{\eps})_{\#}\nu^{\eps}$ has uniformly bounded support, by Lemma~4.3 in \cite{Rindler} there exists a subsequence $(T_{\bar{\rho}}^{\eps_k})_{\#}\nu^{\eps_k}$ and a Young measure $\chi$ such that $(T_{\bar{\rho}}^{\eps_k})_{\#}\nu^{\eps_k}\overset{Y}{\rightharpoonup}\chi$.\\
	\\
	\textbf{Claim 1:} The measure $\chi$ fulfills $\supp\left(\chi_{(t,x)}\right)\subset \{(\rho,u,P)\,:\,\rho=\bar{\rho}\}$ for a.e.~$(t,x)\in(0,T)\times \T^d$.\\
	\\
	Indeed, define $U_n:=\left\{(\rho,u,P)\,:\,|\rho-\bar{\rho}|>\frac{1}{n}\right\}$ for all $n\in\N$. Fix $n\in\N$ and choose a function $f\in \operatorname{Lip}\left(\R\times \R^d\times \R,[0,1]\right)$ with $f\equiv 1$ on $U_{n}$ and $f\equiv 0$ on $\{(\rho,u,P)\,:\,\rho=\bar{\rho}\}$ and Lipschitz constant $L_f$. Let $E\subset (0,T)\times \T^d$ be measurable. We estimate
	\begin{align*}
		0&\leq \int\int \chi_{}({U_{n}})\mathds{1}_E\,\dx\dt\\
		&\leq\int\int\langle \chi,|f(\rho,u,P)-f(\bar{\rho},u,P)|\rangle\mathds{1}_{E}\,\dx\dt\\
		&\leq \int\int\langle \chi,L_f|\rho-\bar{\rho}|\rangle\mathds{1}_{E}\,\dx\dt\\
		&=\lim\limits_{k\rightarrow\infty}L_f\int\int\left\langle (T_{\bar{\rho}}^{\eps_k})_{\sharp}\nu^{\eps_k},|\rho-\bar{\rho}|\right\rangle\mathds{1}_{E}\,\dx\dt\\
		&=\lim\limits_{k\rightarrow\infty}L_f\int\int\left\langle \nu^{\eps_k},|\rho-\bar{\rho}|\right\rangle\mathds{1}_{E}\,\dx\dt\\
		&\leq \liminf\limits_{k\rightarrow \infty}L_f\left(\int\int\langle\nu^{\eps_k},|\rho-\bar{\rho}|^2\rangle\dx\dt \right)^{\frac{1}{2}}\cdot \|\mathds{1}_{E}\|_{L^2((0,T)\times \T^d)}\\
		&\leq \liminf\limits_{k\rightarrow \infty}C\cdot L_f\sqrt{\eps_k}\left(\int\int\left\langle\nu^{\eps_k}, \frac{1}{\eps_k}\left(\rho^{\gamma}-\gamma\bar{\rho}^{\gamma-1}(\rho-\bar{\rho})-\bar{\rho}^{\gamma}\right)\right\rangle\dx\dt \right)^{\frac{1}{2}}\cdot \sqrt{|\T^d|\cdot T}\\
		&\leq \liminf\limits_{k\rightarrow \infty}C\cdot L_f\sqrt{\eps_k}\left(\int\int \left\langle\nu^{\eps_k},\frac{1}{\eps_k}\left|\rho^{\gamma}-\bar{\rho}^{\gamma}\right|\right\rangle\dx\dt \right)^{\frac{1}{2}}\\
		&=0,
	\end{align*}
	where we used the strong convexity of $\rho\mapsto\rho^{\gamma}$ and that the support of $\nu^{\eps}$ is uniformly bounded away from zero in the $\rho$ variable. We also used that $\int\left\langle\nu^{\eps_k},\rho-\bar{\rho}\right\rangle\dx\,(t)=\int\rho^{\eps_k}_0-\bar{\rho}\dx\rightarrow0$ for almost every $t$. The above estimate implies that $0=\chi_{}(U_{n})$ almost everywhere for all $n$. 
	As $\{(\rho,u,P)\,:\,\rho\neq\bar{\rho}\}$ is open, this implies that $\{(\rho,u,P)\,:\,\rho\neq\bar{\rho}\}\subset \supp\left(\chi_{(t,x)}\right)^c$ for a.e.~$(t,x)\in (0,T)\times \T^d$.
	\\
	\\
	\textbf{Claim 2:} The measure $\mu$, defined by $\langle \mu,f\rangle:=\left\langle \chi,f\circ \operatorname{pr}_{(u,P)}\right\rangle$ for all $f\in C(\R^d\times \R)$, is an augmented measure-valued solution of (\ref{eq:incompressibleeuler}).\\
	\\
	Indeed, from Claim 1 we infer that
	\begin{align*}
		\langle\nu^{\eps_k},\rho\rangle=\left\langle(T_{\bar{\rho}}^{\eps_k})_{\#}\nu^{\eps_k},\rho\right\rangle&\overset{*}{\rightharpoonup}\langle\chi,\rho\rangle=\bar{\rho}\text{ in }L^{\infty},\\
		\langle\nu^{\eps_k},\rho u\rangle=\left\langle(T_{\bar{\rho}}^{\eps_k})_{\#}\nu^{\eps_k},\rho u\right\rangle&\overset{*}{\rightharpoonup}\langle\chi,\rho u\rangle=\bar{\rho}\langle\mu,u\rangle\text{ in }L^{\infty},\\
		\langle\nu^{\eps_k},\rho u\otimes u\rangle=\left\langle(T_{\bar{\rho}}^{\eps_k})_{\#}\nu^{\eps_k},\rho u\otimes u\right\rangle&\overset{*}{\rightharpoonup}\langle\chi,\rho u\otimes u\rangle=\bar{\rho}\langle\mu, u\otimes u\rangle\text{ in }L^{\infty},\\
		\left\langle\nu^{\eps_k},\frac{1}{\eps_k}\rho^{\gamma}\right\rangle-\frac{\bar{\rho}^{\gamma}}{\eps_k}=\bar{\rho}\left\langle(T_{\bar{\rho}}^{\eps_k})_{\#}\nu^{\eps_k},\operatorname{pr}_{P}\right\rangle&\overset{*}{\rightharpoonup}\bar{\rho}\langle\chi,P\rangle=\bar{\rho}\langle\mu, P\rangle\text{ in }L^{\infty}.
	\end{align*}
	Thus, 
	$\mu$ is a measure-valued solution of the incompressible Euler equations, since the initial data converge weakly.\\
	\\
	Note that for all $f\in C_0\left(\R^d\times \R\right)$ we have
	\begin{align*}
		\left\langle(P_{\bar{\rho}}^{\eps_k})_{\#}{\nu}^{\eps_k},f\right\rangle=
		\left\langle\nu^{\eps_k}, f\circ \operatorname{pr}_{(u,P)}\circ T_{\bar{\rho}}^{\eps_k}\right\rangle 
		\overset{*}{\rightharpoonup}\left\langle\chi,f\circ \operatorname{pr}_{(u,P)}\right\rangle=\langle\mu, f\rangle.
	\end{align*}
	Thus, $(P_{\bar{\rho}}^{\eps_k})_{\#}{\nu}^{\eps_k}$ converges to $\mu$ in the sense of Young measures.\\
	\\
	Let us now additionally assume the energy admissibility and strong convergence of the initial data as stated in the second part of the theorem. It only remains to check the energy inequality for $\mu$. For that let $\varphi\in C_c^{\infty}\left([0,T),[0,\infty)\right)$. We then have
	\begin{align*}
		\int\limits_{0}^{T}\varphi(t)\left(\int\limits_{\T^d}^{}\left\langle(\nu^{\eps_k})_{(t,x)},\frac{1}{2}\rho|u|^2+\frac{1}{\eps_k}\frac{1}{\gamma-1}\left(\rho^{\gamma}-\bar{\rho}^{\gamma}\right)\right\rangle\dx \right)\dt\leq \int\limits_{0}^{T}\varphi(t)\dt\int\limits_{\T^d}^{}\frac{1}{2}\rho^{\eps_k}_0|u^{\eps_k}_0|^2\dx.
	\end{align*}
	The right hand side converges to $\int\limits_{0}^{T}\varphi(t)\dt\int\limits_{\T^d}^{}\frac{1}{2}\bar{\rho}|u_0|^2\dx$ by the strong convergence and uniform boundedness of the initial data $\left(\rho^{\eps_k}_0,u^{\eps_k}_0\right)$. For the left hand side we obtain
	\begin{align*}
		&\int\limits_{0}^{T}\varphi(t)\left(\int\limits_{\T^d}^{}\left\langle(\nu^{\eps_k})_{(t,x)},\frac{1}{2}\rho|u|^2+\frac{1}{\eps_k}\frac{1}{\gamma-1}\left(\rho^{\gamma}-\bar{\rho}^{\gamma}\right)\right\rangle\dx \right)\dt\\
		&=\int\limits_{0}^{T}\varphi(t)\left(\int\limits_{\T^d}^{}\left\langle\left((T_{\bar{\rho}}^{\eps_k})_{\#}\nu^{\eps_k}\right)_{(t,x)},\frac{1}{2}\rho|u|^2+\frac{\bar{\rho}}{\gamma-1}\operatorname{pr}_P\right\rangle\dx \right)\dt\\
		&\rightarrow \int\limits_{0}^{T}\varphi(t)\left(\int\limits_{\T^d}^{}\bar{\rho}\frac{1}{2}\langle\mu,|u|^2\rangle+\bar{\rho}\frac{1}{\gamma-1}\langle\mu,P\rangle\dx \right)\dt.
	\end{align*}
	Now, as $\mu$ is an augmented solution of (\ref{eq:incompressibleeuler}), its first moment of the pressure $\langle\mu, P\rangle$ has to fulfill
	\begin{align*}
		\langle\mu,P\rangle=\operatorname{div}\operatorname{div}(-\Delta)^{-1}\langle\mu,u\otimes u\rangle.
	\end{align*}
	Moreover, since $\mu$ has compact support, we have $\langle\mu,u\otimes u\rangle\in L^2((0,T)\times \T^d)$. Thus, standard singular integral arguments yield that $\langle\mu, P\rangle$ is given as the weak divergence of a $W^{1,2}$-function, hence its mean value on the torus $\T^d$ is zero almost everywhere in time. This yields the assertion by choosing $\varphi$ as a non-negative approximate identity.
\end{proof}
\begin{rem}
	In the above proof, we have seen that the initial data $(\rho_0^{\eps},u_0^{\eps})$ are uniformly bounded in $L^{\infty}([0,T]\times \T^d)$. Since we consider a bounded space-time domain, we could have simply required the initial data to converge in $L^1$ instead of $L^{\gamma}\times L^2$ in the energy admissibility part of the statement of Theorem~\ref{theo:measuresequenceimpliessolution}. The same observation applies to Corollary~\ref{cor:sequenceimpliessolution} and Proposition~\ref{prop:characterizationapplication} below.
\end{rem}
If every measure-valued solution of a low Mach sequence admits a generating sequence of weak solutions, then the low Mach limit can be generated by a low Mach sequence of weak solutions.
\begin{cor}\label{cor:sequenceimpliessolution}
	Let $(\nu^{\eps})_{\eps>0}$ be a family of Young measures with the following properties:
	\begin{itemize}
		\item The measures $\nu^{\eps}$ solve (\ref{eq:mvsisentropic}) with initial data $(\rho_0^{\eps},u_0^{\eps})\in L^1(\T^d)$.
		\item As $\eps$ goes to zero, suppose that the initial data $(\rho^{\eps}_0,u^{\eps}_0)$ converge weakly in $L^1$ to $(\bar{\rho},u_0)$, where $0<\bar{\rho}\equiv \text{const.}$ and $\operatorname{div}u_0=0$.
		\item For each $\eps>0$ there exists a sequence $\left(\rho_n^{\eps},u_n^{\eps}\right)$ of weak solutions of (\ref{eq:isentropiceulereps}) generating $\nu^{\eps}$ with initial data $\big((\rho_n^{\eps})_0,(u_n^{\eps})_0\big)$.
		\item The sequences $\left(\rho_n^{\eps},u_n^{\eps}\right)$ and $\left(\frac{1}{\eps}\left((\rho_n^{\eps})^{\gamma}-\bar{\rho}^{\gamma}\right)\right)_{\eps,n}$ are uniformly bounded in $\eps,n$ and $\rho_n^{\eps}$ is uniformly bounded away from zero.
	\end{itemize}
	Then there exists a subsequence $(\rho_{n_k}^{\eps_k},u_{n_k}^{\eps_k})$ such that the pressure lifts $P_{\bar{\rho}}^{\eps_k}(\rho_{n_k}^{\eps_k},u_{n_k}^{\eps_k})=\left(u_{n_k}^{\eps_k},\frac{1}{\eps_k\bar{\rho}}\left((\rho_{n_k}^{\eps_k})^{\gamma}-\bar{\rho}^{\gamma}\right)\right)$ generate an augmented measure-valued solution $\mu$ of (\ref{eq:incompressibleeuler}) with initial data $u_0$.\\
	If all $\left(\rho_n^{\eps},u_n^{\eps}\right)$ are also energy admissible and the initial data satisfies
	\begin{align*}
		\big(\left(\rho_n^{\eps}\right)_0,\left(u_n^{\eps}\right)_0\big)\overset{n\rightarrow\infty}{\rightarrow}(\rho^{\eps}_0,u^{\eps}_0)\overset{\eps\rightarrow 0}{\rightarrow}(\bar{\rho},u_0)
	\end{align*}
	 strongly in $L^{\gamma}\times L^2$. Then additionally we obtain that $\mu$ is energy admissible.
\end{cor}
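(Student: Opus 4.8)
The plan is to derive this corollary from Theorem~\ref{theo:measuresequenceimpliessolution} by a diagonal extraction that replaces each measure-valued solution $\nu^{\eps}$ by the weak solutions generating it. First I would verify that the family $(\nu^{\eps})$ satisfies the hypotheses of Theorem~\ref{theo:measuresequenceimpliessolution}. The solution property and the weak $L^1$-convergence of the initial data are exactly the first two bullets. For the uniform compact support of $(T_{\bar{\rho}}^{\eps})_{\#}\nu^{\eps}$, note that the generating weak solutions $(\rho_n^{\eps},u_n^{\eps})$ take values in a fixed set $[\eta,R]\times B$ with $B\subset\R^d$ bounded, by the uniform bound on $(\rho_n^{\eps},u_n^{\eps})$ together with the bound of $\rho_n^{\eps}$ away from zero; moreover, the pressure component $\frac{1}{\eps\bar{\rho}}((\rho_n^{\eps})^{\gamma}-\bar{\rho}^{\gamma})$ of the lift is uniformly bounded by the fourth bullet. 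Since $\nu^{\eps}$ is generated by these functions, its support inherits these bounds for almost every $(t,x)$, so $(T_{\bar{\rho}}^{\eps})_{\#}\nu^{\eps}$ is supported in a fixed compact set $[\eta,R]\times K$ independent of $\eps$. The same uniform bound on $\frac{1}{\eps}((\rho_n^{\eps})^{\gamma}-\bar{\rho}^{\gamma})$ shows that all the pressure lifts $P_{\bar{\rho}}^{\eps}(\rho_n^{\eps},u_n^{\eps})$ take values in a single compact set $K'\subset\R^d\times\R$.

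Applying Theorem~\ref{theo:measuresequenceimpliessolution} now produces a subsequence $\eps_k$ and an augmented measure-valued solution $\mu$ with $(P_{\bar{\rho}}^{\eps_k})_{\#}\nu^{\eps_k}\overset{Y}{\rightharpoonup}\mu$ and initial data $u_0$. For each fixed $k$, the weak solutions $(\rho_n^{\eps_k},u_n^{\eps_k})$ generate $\nu^{\eps_k}$; since the lift $P_{\bar{\rho}}^{\eps_k}$ is continuous and all images lie in the fixed compact set $K'$, the composition property of Young measures yields $P_{\bar{\rho}}^{\eps_k}(\rho_n^{\eps_k},u_n^{\eps_k})\overset{Y}{\rightharpoonup}(P_{\bar{\rho}}^{\eps_k})_{\#}\nu^{\eps_k}$ as $n\to\infty$.

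Because all Young measures involved are supported in $K'$, convergence in the sense of Young measures over $(0,T)\times\T^d$ is metrizable on this family; fix such a metric $d$. I would then choose $n_k$ large enough that simultaneously $d\big(P_{\bar{\rho}}^{\eps_k}(\rho_{n_k}^{\eps_k},u_{n_k}^{\eps_k}),(P_{\bar{\rho}}^{\eps_k})_{\#}\nu^{\eps_k}\big)<\frac{1}{k}$ and, for use in the energy part, $\big\|\big((\rho_{n_k}^{\eps_k})_0,(u_{n_k}^{\eps_k})_0\big)-(\rho_0^{\eps_k},u_0^{\eps_k})\big\|_{L^{\gamma}\times L^2}<\frac{1}{k}$. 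The triangle inequality combined with $(P_{\bar{\rho}}^{\eps_k})_{\#}\nu^{\eps_k}\overset{Y}{\rightharpoonup}\mu$ then gives $P_{\bar{\rho}}^{\eps_k}(\rho_{n_k}^{\eps_k},u_{n_k}^{\eps_k})\overset{Y}{\rightharpoonup}\mu$, which is the first assertion.

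For the energy statement, I would regard the diagonal sequence as the atomic measure-valued solutions $\tilde{\nu}^{\eps_k}:=\delta_{(\rho_{n_k}^{\eps_k},u_{n_k}^{\eps_k})}$, which solve (\ref{eq:mvsisentropic}) with Mach number $\sqrt{\eps_k}$ and are energy admissible by hypothesis. By the choice of $n_k$ and the iterated strong convergence assumed in the corollary, their initial data converge strongly in $L^{\gamma}\times L^2$ to $(\bar{\rho},u_0)$, and they inherit the uniform compact support. Thus $(\tilde{\nu}^{\eps_k})_k$ satisfies all hypotheses of the second part of Theorem~\ref{theo:measuresequenceimpliessolution}; since this family generates $\mu$, it follows that $\mu$ is energy admissible. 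The step requiring the most care is the diagonal extraction: one must justify metrizability of the relevant Young-measure convergence on the uniformly compactly supported family and coordinate the single index $n_k$ so that the Young-measure and initial-data estimates hold at once. Once this is arranged, both assertions reduce directly to Theorem~\ref{theo:measuresequenceimpliessolution}.
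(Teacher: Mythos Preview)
Your argument is correct, but it takes a different route from the paper. The paper never applies Theorem~\ref{theo:measuresequenceimpliessolution} to the measures $\nu^{\eps}$ themselves. Instead, it first observes that for each fixed $\eps$ the initial data $\big((\rho_n^{\eps})_0,(u_n^{\eps})_0\big)$ converge weakly* in $L^{\infty}(\T^d)$ to $(\rho_0^{\eps},u_0^{\eps})$ (this follows by testing the weak formulation), and then uses the separability of $L^1(\T^d)$ to extract a diagonal subsequence $\big((\rho_{n_k}^{\eps_k})_0,(u_{n_k}^{\eps_k})_0\big)$ converging weakly* to $(\bar{\rho},u_0)$. With this in hand, the paper applies Theorem~\ref{theo:measuresequenceimpliessolution} \emph{directly to the Dirac measures} $\delta_{(\rho_{n_k}^{\eps_k},u_{n_k}^{\eps_k})}$, which now satisfy all the required hypotheses, and obtains $\mu$ as the Young-measure limit of this diagonal sequence (after passing to a further subsequence). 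The energy part follows by the same device.

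Your two-step approach---first obtaining $\mu$ from the $\nu^{\eps}$, then matching the weak solutions to it via metrizability of Young-measure convergence on compactly supported families---has the pleasant feature that it identifies $\mu$ as the genuine low Mach limit of the $\nu^{\eps}$, not merely of some diagonal subsequence of weak solutions. The paper's route, by contrast, avoids the metrizability discussion entirely: its diagonal extraction is governed only by the initial data and hence reduces to the standard separability argument for weak* convergence against $L^1$. Both proofs ultimately lean on Theorem~\ref{theo:measuresequenceimpliessolution} for the heavy lifting; the difference lies in \emph{when} one passes from the measure-valued solutions to the weak solutions.
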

If, as above, it holds that $P^{\eps}_{\bar{\rho}}(\rho^{\eps},u^{\eps})\overset{Y}{\rightharpoonup}\mu$, we say that $\mu$ is the low Mach limit of the sequence $(\rho^{\eps},u^{\eps})$.
\begin{proof}
	The functions $(\rho_n^{\eps},u_n^{\eps})$ and $\langle\nu^{\eps},\operatorname{id}\rangle$ are weakly continuous in time and uniformly bounded. Hence, the initial data $\big((\rho_n^{\eps})_0,(u_n^{\eps})_0\big)$ and $(\rho^{\eps}_0,u^{\eps}_0)$ are uniformly bounded in $L^{\infty}(\T^d)$. Moreover, by testing (\ref{eq:mvsisentropic}) for $(\rho_n^{\eps},u_n^{\eps})$ and $\nu^{\eps}$ against test functions of the form $\varphi(t,x)=\eta(x)\cdot \chi(t)$ one observes that the right hand sides, i.e. the initial data, converge weakly* in $L^{\infty}(\T^d)$ to $(\rho^{\eps}_0,u^{\eps}_0)$ as $n\rightarrow\infty$. Since $L^1(\T^d)$ is separable, there exists a diagonal subsequence such that 
	$\left(\rho_{n_{k}}^{\eps_k}\right)_0\overset{*}{\rightharpoonup} \bar{\rho}$ and $\left(u^{\eps_k}_{n_{k}}\right)_0\overset{*}{\rightharpoonup} u_0$ as $k \rightarrow \infty$.\\
	Therefore, the diagonal sequence $(\rho^{\eps_k}_{n_{k}},u^{\eps_k}_{n_{k}})$ satisfies all requirements of Theorem~\ref{theo:measuresequenceimpliessolution} when we identify these weak solutions with its corresponding Dirac measures. Indeed, since by assumption the generating sequence $T_{\bar{\rho}}^{\eps_k}(\rho^{\eps_k}_{n_k},u^{\eps_k}_{n_k})$ is uniformly bounded and bounded away from zero in the density, we obtain that for all $\eps>0$ it holds that $\supp\left((T_{\bar{\rho}}^{\eps_k})_{\#}\delta_{\left(\rho^{\eps_k}_{n_{k}},u^{\eps_k}_{n_{k}}\right)}\right)\subset [\eta,R]\times K$ for some $R>\eta>0$ and $K\subset \R^d\times \R$ compact. 
	Thus, there exists a subsequence (not relabeled) such that the pressure lifts $P_{\bar{\rho}}^{\eps_k}\left(\rho^{\eps_k}_{n_{k}},u^{\eps_k}_{n_{k}}\right)$ generate an augmented measure-valued solution $\mu$ with initial data $u_0$.\\
	Now additionally assume the energy admissibility and strong convergence of the initial data. Therefore, we could have chosen the above subsequence such that also $\left((\rho_{n_{k}}^{\eps_k})_0,(u_{n_{k}}^{\eps_k})_0\right)\rightarrow (\bar{\rho},u_0)$ in $L^{\gamma}\times L^2$. The second part of Theorem~\ref{theo:measuresequenceimpliessolution} now yields the assertion.
\end{proof}
In the case of monoatomic gases, i.e. $\gamma=1+\frac{2}{d}$, we can also give a version of the previous result with assumptions only on a low Mach sequence of generating measures $\nu^{\eps}$. The main issue here is to obtain suitably bounded compressible weak solutions generating $\nu^{\eps}$.
\begin{prop}\label{prop:characterizationapplication}
	Consider the case $\gamma=1+\frac{2}{d}$ with $d\geq 2$. Let $(\nu^{\eps})_{\eps>0}$ be a family of Young measures with the following properties:
	\begin{itemize}
		\item The measures $\nu^{\eps}$ solve (\ref{eq:mvsisentropic}) with initial data $(\rho_0^{\eps},u_0^{\eps})\in L^1(\T^d)$.
		\item The initial data $(\rho_0^{\eps},u_0^{\eps})$ converge weakly in $L^1$ to $(\bar{\rho},u_0)$ satisfying $0<\bar{\rho}\equiv \text{const.}$ and $\operatorname{div}u_0=0$.
		\item There exists some $R>0$ such that for all $\eps>0$
		\begin{align}
			\left\langle \left((\Theta^{\eps})_\#\nu^{\eps}\right)_{(t,x)},f\right\rangle\geq Q^R_{\mathcal{B}_E}f\left(\left\langle \left((\Theta^{\eps})_\#\nu^{\eps}\right)_{(t,x)},\operatorname{id}\right\rangle\right)\label{eq:sufficientjensencond}
		\end{align}
		for all $f\in C(\R^N)$ and all $(t,x)\in\Omega_0$, where $\Omega_0\subset (0,T)\times \T^d$ is a set of full measure.
		\item For all $\eps>0$ there exist $\sigma^{\eps}\in C\left([0,T]\times \T^d\right)$ and $w^{\eps}\in W^{2,\infty}\left((0,T)\times \T^d\right)$ such that
		\begin{align*}
			\left\langle (\Theta^{\eps})_\#\nu^{\eps},\operatorname{id}\right\rangle=\sigma^{\eps}+\mathcal{B}_Ew^{\eps}.
		\end{align*}
		\item There exists a bounded and continuous function $\chi\colon [0,T]\to [0,\infty)$ such that
		\begin{align*}
			\supp\left(\nu^{\eps}_{(t,x)}\right)\subset \{(\rho,u)\,:\,|\rho-\bar{\rho}|\leq \eps,\ |u|^2\leq \chi(t)\}
		\end{align*}
		holds for all $\eps>0$ and a.e.~$(t,x)\in (0,T)\times \T^d$.
	\end{itemize}
 	Then there exists a subsequence $(\eps_k)_{k\in\N}$ and uniformly bounded weak solutions $(\rho_{k}^{\eps_k},u_{k}^{\eps_k})$ of (\ref{eq:isentropiceulereps}) such that $P_{\bar{\rho}}^{\eps_k}(\rho_{k}^{\eps_k},u_{k}^{\eps_k})\overset{Y}{\rightharpoonup}\mu$, where $\mu$ is an augmented measure-valued solution of (\ref{eq:incompressibleeuler}).\\
 	If we additionally assume that:
 	\begin{itemize}
 		\item The measures $\nu^{\eps}$ are energy admissible and satisfy
 		\begin{align*}
 			\supp\left(\left((\Theta^{\eps})_\#\nu^{\eps}\right)_{(t,x)} \right)\subset\left\{(\rho,u,M,Q)\,:\,Q=\frac{2}{d}\left\langle\nu^{\eps}_{(t,x)},e_{\eps}\right\rangle \right\}
 		\end{align*}
 		for a.e.~$(t,x)$.
 		\item The function $(t,x)\mapsto\left\langle\nu^{\eps}_{(t,x)},e_{\eps}\right\rangle$ is continuous for all $\eps>0$.
 		\item The initial data $(\rho_0^{\eps},u_0^{\eps})$ converge strongly in $L^{\gamma}\times L^2$.
 	\end{itemize}
	Then $\mu$ is an energy admissible augmented measure-valued solution and the low Mach sequence $(\rho_k^{\eps_k},u_k^{\eps_k})$ is also energy admissible.
\end{prop}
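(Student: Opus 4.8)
The plan is to reduce the statement to Corollary~\ref{cor:sequenceimpliessolution}: for each fixed $\eps$ I would produce a sequence of weak solutions of (\ref{eq:isentropiceulereps}) generating $\nu^{\eps}$ and carrying bounds uniform in both $n$ and $\eps$, and then invoke the diagonal argument already contained in that corollary. The central tool is the sufficiency direction of the characterization of weak-solution-generated measure-valued solutions from \cite{GW21}. Observe first that, since $\nu^{\eps}$ solves (\ref{eq:mvsisentropic}) and $\rho u\otimes u = \rho\,u\ocircle u + \rho\frac{|u|^2}{d}\,\mathbb{E}_d$, the lifted measure $(\Theta^{\eps})_{\#}\nu^{\eps}$ is a measure-valued solution of the relaxed system (\ref{eq:relaxedeulersystem}), supported on the constitutive manifold $\image\Theta^{\eps}$.

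First, fixing $\eps>0$, I would verify that $(\Theta^{\eps})_{\#}\nu^{\eps}$ meets the hypotheses of \cite{GW21}: the truncated Jensen inequality (\ref{eq:sufficientjensencond}) with the $\eps$-independent truncation radius $R$, the decomposition $\langle(\Theta^{\eps})_{\#}\nu^{\eps},\operatorname{id}\rangle=\sigma^{\eps}+\mathcal{B}_Ew^{\eps}$ into a continuous background field plus a potential term, and the uniform support bound. Roughly speaking, the content of \cite{GW21} is that the Jensen condition lets one build oscillations whose values collapse onto $\image\Theta^{\eps}$, so that the nonlinear constitutive relations $M=\rho\,u\ocircle u$ and $Q=\frac1\eps\rho^{\gamma}+\rho\frac{|u|^2}{d}$ are recovered in the limit; since $\Theta^{\eps}$ is invertible wherever $\rho$ is bounded away from zero, which holds on the support for small $\eps$, this yields genuine weak solutions $(\rho_n^{\eps},u_n^{\eps})$ of the nonlinear isentropic system generating $\nu^{\eps}$.

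Next I would extract the $\eps$-uniform bounds demanded by Corollary~\ref{cor:sequenceimpliessolution}. The support constraint $|\rho-\bar\rho|\leq\eps R$ pins the densities into $[\bar\rho-\eps R,\bar\rho+\eps R]$, hence uniformly bounded and bounded away from zero for small $\eps$, while $|u|^2\leq\chi(t)$ controls the velocities. Because the truncation radius $R$ is independent of $\eps$, the oscillation amplitude---and thus the pressure lift $\frac{1}{\eps\bar\rho}(\rho^{\gamma}-\bar\rho^{\gamma})$, which by the mean value theorem is comparable to $\gamma\bar\rho^{\gamma-2}(\rho-\bar\rho)/\eps$ and therefore of order $R$---remains uniformly bounded in $n$ and $\eps$. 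With all hypotheses of Corollary~\ref{cor:sequenceimpliessolution} in place, its first part furnishes the diagonal subsequence $(\rho_k^{\eps_k},u_k^{\eps_k})$ whose pressure lifts generate the augmented solution $\mu$.

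For the energy-admissible statement I would exploit that $\gamma=1+\frac2d$ forces $Q=\frac2d e_{\eps}$ identically on $\image\Theta^{\eps}$, so the extra support constraint $Q=\frac2d\langle\nu^{\eps},e_{\eps}\rangle$ means that $e_{\eps}$ is deterministic on the support, equal to the continuous profile $(t,x)\mapsto\langle\nu^{\eps}_{(t,x)},e_{\eps}\rangle$. Hence the generating weak solutions can be arranged to lie on this prescribed energy level set, and energy admissibility of $\nu^{\eps}$ together with the strong convergence of the initial data transfers energy admissibility to the sequences and, via the second part of Corollary~\ref{cor:sequenceimpliessolution}, to $\mu$. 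I expect the main obstacle to be exactly the first step flagged in the text: squeezing genuinely nonlinear, uniformly bounded weak solutions out of \cite{GW21}---the delicate point is that the truncation radius $R$ must be chosen once and for all, independent of $\eps$, so that the $\eps R$-thin density support simultaneously keeps the nonlinear pullback under control and delivers the $\eps$-uniform bounds required before the low Mach limit can be taken.
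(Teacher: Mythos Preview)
Your overall strategy matches the paper's: apply the sufficiency result of \cite{GW21} to each $\nu^{\eps}$ to produce generating weak solutions, verify the uniform-in-$\eps$ bounds needed for Corollary~\ref{cor:sequenceimpliessolution}, and then pass to the diagonal. The energy-admissible part also follows the same line, invoking Theorem~1.2 of \cite{GW21} in place of Theorem~1.1.

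The gap is in your second step. You write that the support constraint $|\rho-\bar\rho|\leq\eps R$ ``pins the densities'' and, combined with the $\eps$-independent truncation radius, forces the pressure lift of the weak solutions to be of order $R$. But the support bound is a constraint on $\nu^{\eps}$, not on the generating sequence $(\rho_n^{\eps},u_n^{\eps})$: the black-box statement of \cite{GW21} only asserts that $(\rho_n^{\eps},u_n^{\eps})$ generates $\nu^{\eps}$, not that its values remain close to $\supp\nu^{\eps}$. The truncation radius $R$ in the Jensen condition bounds the potential oscillations added in the lifted coordinates, which by itself does not prevent $\rho_n^{\eps}$ from drifting an $\eps$-independent distance from $\bar\rho$---and any such drift would blow up $\frac{1}{\eps}\bigl((\rho_n^{\eps})^{\gamma}-\bar\rho^{\gamma}\bigr)$. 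The paper closes this gap by \emph{going into the proof} of Theorem~1.1 in \cite{GW21}: one extracts that the construction actually yields
\[
\underset{(t,x)\in(0,T)\times\T^d}{\esssup}\,\dist\!\left(\Theta^{\eps}(\rho_j^{\eps},u_j^{\eps})(t,x),\,K^{\eps}_t\right)\to 0,
\]
where $K^{\eps}_t$ is an explicit set containing $\supp\bigl((\Theta^{\eps})_{\#}\nu^{\eps}\bigr)_{(t,x)}$, built from the hypotheses (this is also where the continuity of $\chi$ enters). Only with this in hand can one, for each small $\eps$, choose $j=j(\eps)$ large enough so that $\frac{1}{\eps}\bigl|(\rho_j^{\eps})^{\gamma}-\bar\rho^{\gamma}\bigr|$ and $|u_j^{\eps}|$ inherit $\eps$-uniform bounds from $K^{\eps}_t$. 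So the obstacle you flag is real, but its resolution lies not in the Jensen-truncation parameter per se; it requires pulling this quantitative distance-to-support information out of the convex-integration construction in \cite{GW21}.
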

\begin{rem}\label{rem:monoatomic}
	In Proposition~\ref{prop:characterizationapplication}, we considered only the case of monoatomic gas dynamics for the compressible model. This stems from the usage of the results of \cite{GW21} in the proof below. Here, the specific choice $\gamma=1+\frac{2}{d}$ yields a significant simplification due the fact that the energy density $e_{\eps}=\frac{1}{2}\rho|u|^2+\frac{1}{1-\gamma}\frac{1}{\eps}\rho^{\gamma}$ and the generalized pressure $Q=\frac{1}{\eps}\rho^{\gamma}+\rho\frac{|u|^2}{d}$ of lifted states $(\rho,u,M,Q)=\Theta^{\eps}(\rho,u)$ coincide up to multiplication with a constant. The most important reason for the lack of results beyond this special case is that the method of genuinely compressible convex integration from \cite{M21} and \cite{DSW21}, a crucial tool in \cite{GW21}, is so far only available for $\gamma=1+\frac{2}{d}$.
\end{rem}
\begin{proof}[Proof of Proposition~\ref{prop:characterizationapplication}.]
	Consider only $0<\eps<1$ small enough such that $\bar{\rho}-2\eps>0$. We apply Theorem~1.1 in \cite{GW21} on every $\nu^{\eps}$ to obtain a sequence $(\rho_j^{\eps},u_j^{\eps})$ of weak solutions of (\ref{eq:isentropiceulereps}) generating $\nu^{\eps}$. Due to the assumptions on the support of $\nu^{\eps}$ the sequence $(\rho_j^{\eps})$ can be chosen such that
	\begin{align*}
		\bar{\rho}-2\eps\leq \rho_j^{\eps} &\leq \bar{\rho}+2\eps.
	\end{align*}
	In particular, it holds that
	\begin{align*}
		\frac{1}{\eps}|(\rho_j^{\eps})^{\gamma}-\bar{\rho}^{\gamma}|\leq \frac{1}{\eps}\gamma(\bar{\rho}+2\eps)^{\gamma-1}|\rho_j^{\eps}-\bar{\rho}|\leq 2\gamma(\bar{\rho}+2)^{\gamma-1}.
	\end{align*}
	This implies that $\frac{1}{\eps}\left((\rho_j^{\eps})^{\gamma}-\bar{\rho}^{\gamma}\right)$ and $\rho_j^{\eps}$ are uniformly bounded. Moreover, $\rho_j^{\eps}$ is uniformly bounded away from zero for small enough $\eps$.\\
	For the boundedness of the sequence $(u_j^{\eps})$ observe that due to the usage of the convex integration method from \cite{DSW21} in the proof of Theorem~1.1 in \cite{GW21}, we obtain that
	\begin{align*}
		\rho_j^{\eps}\frac{|u_j^{\eps}|^2}{d}+\frac{1}{\eps}(\rho_j^{\eps})^{\gamma}=Q_j^{\eps}.
	\end{align*}
	Here, $Q_j^{\eps}$ is the generalized pressure corresponding to $(\rho_j^{\eps},u_j^{\eps})$ satisfying the estimate
	\begin{align*}
		Q_j^{\eps}\leq (\bar{\rho}+\eps)\frac{\chi}{d}+\frac{1}{\eps}(\bar{\rho}+\eps)^{\gamma}+1.
	\end{align*}
	The latter estimate can be guaranteed by the truncation step in the proof of Theorem~1.1 in \cite{GW21}, because on the support of $(\Theta^{\eps})_{\sharp}\nu^{\eps}$ we have that $Q\leq (\bar{\rho}+\eps)\frac{\chi}{d}+\frac{1}{\eps}(\bar{\rho}+\eps)^{\gamma}$. This is where the continuity of $\chi$ is used.\\ Hence, we obtain
	\begin{align*}
		(\bar{\rho}-2\eps)\frac{|u_j^{\eps}|^2}{d}+\frac{1}{\eps}(\bar{\rho}-2\eps)^{\gamma}\leq \rho_j^{\eps}\frac{|u_j^{\eps}|^2}{d}+\frac{1}{\eps}(\rho_j^{\eps})^{\gamma}\leq (\bar{\rho}+\eps)\frac{\chi}{d}+\frac{1}{\eps}(\bar{\rho}+\eps)^{\gamma}+1.
	\end{align*}
	From this we can infer that $(u_j^{\eps})$ is uniformly bounded in $j,\eps$ since the terms of order $\frac{1}{\eps}$ cancel in the limit. The assertion now follows from Corollary~\ref{cor:sequenceimpliessolution}.\\
	\\
	In the case of energy admissibility the same arguments as above can be applied to Theorem~1.2 in \cite{GW21} and its proof. This yields for every $\eps$ small enough a sequence $(\rho_j^{\eps},u_j^{\eps})$ of energy admissible weak solutions with initial data converging strongly to $(\rho_0^{\eps},u_0^{\eps})$ in $L^{\gamma}\times L^2$.
\end{proof}
\section{Necessary Jensen-type conditions for incompressible singular limits}\label{sect:necessaryjensentypeconditionsforincompressiblesingularlimits}

After investigating situations in which a low Mach limit exists, we now want to study necessary conditions for such a limit process. Actually, we will consider also other singular limits like vanishing viscosity limits and limits of weak solutions. The Jensen-type condition (\ref{eq:jensencondition}) plays a fundamental role in this context.\\
Let us start with an $L^2$-condition for vanishing viscosity limits of uniformly bounded, hence smooth, energy admissible solutions of the Navier-Stokes system.
\begin{theo}
	Let $(u_n)$ be a uniformly bounded, energy admissible vanishing viscosity sequence of incompressible Navier-Stokes solutions generating the Young measure $\mu$ with any initial data. Then $\mu$ is an augmented measure-valued solution of (\ref{eq:incompressibleeuler}) satisfying
	\begin{align*}
		\left\langle \left(S_{\#}\mu\right)_{(t,x)},f\right\rangle\geq Q_{\mathcal{A}_E}f\left(\left\langle \left(S_{\#}\mu\right)_{(t,x)},\operatorname{id}\right\rangle\right)
	\end{align*}
	for all $f\in C(\R^N)$ with $|f(\bullet)|\leq C(1+|\bullet|^2)$ and all $(t,x)\in \Omega_0$, where $\Omega_0\subset (0,T)\times \T^d$ is a set of full measure.
\end{theo}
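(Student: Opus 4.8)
The plan is to lift each Navier--Stokes solution into the state space of the relaxed system~(\ref{eq:relaxedeulersystem}), to show that the resulting sequence is asymptotically $\mathcal{A}_E$-free, and then to run the Jensen inequality for $\mathcal{A}_E$-free Young measures in the spirit of \cite{FM99} and \cite{GW21}. For each $n$ let $P_n:=(-\Delta)^{-1}\operatorname{div}\operatorname{div}(u_n\otimes u_n)$ be the pressure associated with the divergence-free velocity $u_n$, so that $(u_n,P_n)\overset{Y}{\rightharpoonup}\mu$, and set $w_n:=S(u_n,P_n)$. Using $\operatorname{div}u_n=0$ together with the identity $\operatorname{div}(u_n\ocircle u_n)=\operatorname{div}(u_n\otimes u_n)-\nabla\frac{|u_n|^2}{d}$, a direct computation shows that $w_n$ solves the continuity equation of~(\ref{eq:relaxedeulersystem}) exactly and the momentum equation up to the single residual $\alpha_n\Delta u_n$; that is, $\mathcal{A}_E w_n=(\alpha_n\Delta u_n,0)$ in the sense of distributions.

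First I would check that this residual is negligible. Energy admissibility together with the uniform $L^\infty$-bound provides the viscous dissipation estimate $\alpha_n\int_0^T\!\int_{\T^d}|\nabla u_n|^2\,\dxdt\le C$, so $\sqrt{\alpha_n}\,\nabla u_n$ is bounded in $L^2$. Writing $\alpha_n\Delta u_n=\operatorname{div}\big(\sqrt{\alpha_n}\,(\sqrt{\alpha_n}\,\nabla u_n)\big)$ and using $\sqrt{\alpha_n}\to0$ shows $\alpha_n\Delta u_n\to0$ strongly in $H^{-1}$, whence $\mathcal{A}_E w_n\to0$ in $H^{-1}$. In parallel I would record the integrability needed for quadratic growth: $u_n$ is bounded in $L^\infty$ by hypothesis, and the Calder\'on--Zygmund estimate applied to $P_n$ yields a uniform bound in $L^q(\T^d)$ for every $q<\infty$, since $u_n\otimes u_n$ is bounded in $L^\infty$. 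Hence $w_n$ is bounded in some $L^q$ with $q>2$, and the family $\{|w_n|^2\}$ is equiintegrable; this is precisely what permits test functions $f$ of at most quadratic growth and accounts for the growth restriction in the statement.

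Next I would identify the limit. Since $(u_n,P_n)\overset{Y}{\rightharpoonup}\mu$, the continuity of $S$ combined with the equiintegrability of the quadratic part excludes concentration and gives $w_n\overset{Y}{\rightharpoonup}S_{\#}\mu$. Passing to the limit in the weak Navier--Stokes formulation---where the viscous term vanishes by the $H^{-1}$-bound above and the averages of $u_n$, $u_n\otimes u_n$, $P_n$ converge to $\langle\mu,u\rangle$, $\langle\mu,u\otimes u\rangle$, $\langle\mu,P\rangle$---shows that $\mu$ is an augmented measure-valued solution of~(\ref{eq:incompressibleeuler}). For the Jensen inequality I would then invoke the $\mathcal{A}$-free Young measure theory of \cite{FM99}: exploiting the constant-rank property of $\mathcal{A}_E$ and its potential $\mathcal{B}_E$, the measure $S_{\#}\mu$ generated by the $L^q$-bounded, asymptotically $\mathcal{A}_E$-free sequence $w_n$ satisfies $\langle(S_{\#}\mu)_{(t,x)},g\rangle\ge g\big(\langle(S_{\#}\mu)_{(t,x)},\operatorname{id}\rangle\big)$ for every $\mathcal{A}_E$-quasiconvex $g$ of quadratic growth. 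Choosing $g=Q_{\mathcal{A}_E}f$, which is $\mathcal{A}_E$-quasiconvex and satisfies $Q_{\mathcal{A}_E}f\le f$, yields the asserted inequality for all continuous $f$ with $|f(\bullet)|\le C(1+|\bullet|^2)$.

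The hard part will be running the $\mathcal{A}$-free Jensen machinery of \cite{FM99} in this setting: one has to combine the merely asymptotic $\mathcal{A}_E$-freeness---the residual converges only in $H^{-1}$ rather than being exactly free---with quadratic, rather than uniform, growth. The constant-rank structure of $\mathcal{A}_E$ and the equiintegrability of $\{|w_n|^2\}$ are exactly what make the full envelope $Q_{\mathcal{A}_E}$ (and not just a truncated version $Q^q_{\mathcal{B}_E}$ as in \cite{GW21}) appear in the limit; verifying these hypotheses and absorbing the $H^{-1}$-residual into the $\mathcal{A}$-free correction is the crux of the argument.
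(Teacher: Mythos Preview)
Your proposal is correct and follows essentially the same route as the paper: lift via $S$, show the viscous residual vanishes in $H^{-1}$ using the energy inequality, bound the pressure by Calder\'on--Zygmund, and then invoke the $\mathcal{A}$-free Young measure machinery of \cite{FM99}. The paper makes explicit what you flag as ``the hard part'': it cites Lemma~2.15 of \cite{FM99} to replace $(w_n)$ by an exactly $\mathcal{A}_E$-free, $2$-equiintegrable sequence before applying Theorem~4.1 of \cite{FM99}---your direct observation that $P_n$ is bounded in every $L^q$ already gives the equiintegrability, so only the correction to exact $\mathcal{A}_E$-freeness remains.
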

\begin{proof}
	Since $u_n$ is uniformly bounded and weakly continuous in time, also its initial data $u_n^0$ is uniformly bounded. Hence, the sequence of initial data admits a weakly convergent subsequence $u_{n_k}^0\overset{L^2}{\rightharpoonup} u_0$. Moreover, from the energy inequality we obtain that
	\begin{align*}
		\|\mathcal{A}_E(S(u_n,P_n))\|_{W^{-1,2}\left((0,T)\times \T^d\right)}^2&=\underset{\underset{\varphi\in C_c^{\infty}\left((0,T)\times \T^d\right)}{\|\varphi\|_{W^{1,2}\left((0,T)\times \T^d\right)}\leq 1,}}{\sup}\left|\int\limits_{(0,T)\times \T^d}^{} \alpha_n\nabla u_n:\nabla\varphi\,\dx\dt\right|^2\\
		&\leq \alpha_n^2\|\nabla u_n\|^2_{L^2\left((0,T)\times \T^d\right)}\\
		&\leq \alpha_n\int\limits_{\T^d}^{}|u_n^0|^2\dx\\
		&\rightarrow 0,
	\end{align*}
	where $\alpha_n\rightarrow 0$ is the sequence of viscosity parameters and $P_n$ the corresponding pressure for $u_n$. It is now straightforward to check that the limit $\mu$ is an augmented measure-valued solution with initial data $u_0$.\\
	The uniform boundedness of $(u_n)$ guarantees a uniform $L^2$-bound on $P_n=(-\Delta)^{-1}\operatorname{div}\operatorname{div}(u_n\otimes u_n)$ by a Calder\'on-Zygmund argument. Hence, also the sequence $S(u_n,P_n)$ is uniformly bounded in $L^2$ and generates the lift $S_{\#}\mu$. Since $\|\mathcal{A}_E(S(u_n,P_n))\|_{W^{-1,2}}$ goes to zero as $n\rightarrow\infty$, we obtain from Lemma~2.15 in \cite{FM99} a $2$-equiintegrable sequence $(z_n)\subset L^2\left((0,T)\times \T^d\right)\cap \ker(\mathcal{A}_E)$ also generating $S_{\#}\mu$. Theorem~4.1 in \cite{FM99} now yields that for all $f\in C(\R^N)$ with $|f(\bullet)|\leq C(1+|\bullet|^2)$
	\begin{align*}
		\left\langle \left(S_{\#}\mu\right)_{(t,x)},f\right\rangle\geq Q_{\mathcal{A}_E}f\left(\left\langle \left(S_{\#}\mu\right)_{(t,x)},\operatorname{id}\right\rangle\right)
	\end{align*}
	holds on a set of full measure $\Omega_0\subset(0,T)\times \T^d$.	
\end{proof}

For uniformly bounded low Mach limits or limits of weak solutions generating $\mu$ the quadratic growth condition on $f$ can actually be dropped. This can be shown by using the following more abstract result.
\begin{prop}\label{prop:generalizedjensencond}
	Let $\mathcal{A}$ be a linear homogeneous differential operator with coefficient matrices acting on $\R^m$. Assume there exists an $\mathcal{A}$-free sequence $(z_n)$ which is uniformly bounded in $L^{\infty}\left((0,T)\times \T^d\right)$ by $R>0$ and generates a Young measure $\nu$. Then $\nu$ satisfies the Jensen condition
	\begin{align}
		\left\langle\nu_{(t,x)},f\right\rangle\geq Q_{\mathcal{A}}^{8R}f\left(\left\langle\nu_{(t,x)},\operatorname{id}\right\rangle\right)\label{eq:generalizedjenseninequality}
	\end{align}
	for all $f\in C(\R^N)$ and all $(t,x)\in\Omega_0$, where $\Omega_0\subset (0,T)\times \T^d$ is a set of full measure.
\end{prop}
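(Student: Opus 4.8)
The plan is to establish the inequality at almost every $(t_0,x_0)$ by producing, for every $\delta>0$, an admissible competitor in the definition of $Q_{\mathcal{A}}^{8R}$ that nearly realizes the value $\langle\nu_{(t_0,x_0)},f\rangle$. Abbreviate $\nu_0:=\nu_{(t_0,x_0)}$ and $\bar z:=\langle\nu_0,\operatorname{id}\rangle$. Since the whole sequence is bounded by $R$, the measure $\nu_0$ is supported in $\overline{B_R(0)}$ and hence $|\bar z|\le R$. Choosing $(t_0,x_0)$ to be a Lebesgue point of $(t,x)\mapsto\nu_{(t,x)}$, I would first perform a blow-up: because $\mathcal{A}$ is homogeneous with constant coefficients, the rescaled fields $y\mapsto z_n\big((t_0,x_0)+\lambda y\big)$ remain $\mathcal{A}$-free and bounded by $R$, and a diagonal argument in $n$ and $\lambda\to0$ yields an $\mathcal{A}$-free sequence $(v_j)$ on a fixed cell, uniformly bounded by $R$, generating the homogeneous Young measure $\nu_0$. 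In particular $v_j\rightharpoonup\bar z$ and $\dashint f(v_j)\to\langle\nu_0,f\rangle$ for every $f\in C(\R^N)$, the uniform bound making the growth of $f$ irrelevant.

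Next I would convert $(v_j)$ into admissible test functions for $Q_{\mathcal{A}}$, i.e.\ periodic mean-zero elements of $\ker\mathcal{A}$. Writing $v_j-\bar z=\mathcal{B}w_j$ via the potential $\mathcal{B}$ and cutting off $w_j$ by a function $\zeta_j$ equal to one away from a thin boundary layer, the fields $\varphi_j:=\mathcal{B}(\zeta_j w_j)$ are compactly supported in the cell, hence extend periodically, lie in $\ker\mathcal{A}$, and have vanishing mean. Where $\zeta_j\equiv1$ one has $\varphi_j=v_j-\bar z$, so $|\varphi_j|\le 2R$ there, whereas on the layer --- a set whose measure tends to zero --- the derivatives of $\zeta_j$ entering $\mathcal{B}(\zeta_j w_j)$ may render $\varphi_j$ large.

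The main obstacle is precisely to upgrade this periodization, which only controls $L^p$-norms, into a genuine $L^\infty$-bound, since the definition of $Q_{\mathcal{A}}^{8R}$ demands $\|\varphi\|_{L^\infty}\le 8R$ while a naive cutoff inflates the sup-norm in the boundary layer. Here I would invoke the $\mathcal{A}$-free truncation of \cite{M99} and \cite{G21}: as the set $\{|\varphi_j|>2R\}$ lies in the thin layer and thus has small measure, the truncation produces a periodic $\mathcal{A}$-free field $\hat\varphi_j$ with $\|\hat\varphi_j\|_{L^\infty}\le 8R$ coinciding with $\varphi_j$ outside a set of small measure, the explicit constant $8$ being the output of the truncation lemma applied at level comparable to $2R$; a vanishing constant correction restores the mean-zero normalization. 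On the good set $\hat\varphi_j=\varphi_j$, so $\dashint f(\bar z+\hat\varphi_j)$ differs from $\dashint f(\bar z+\varphi_j)\to\langle\nu_0,f\rangle$ only by the bad-set contribution, which is bounded by $\|f\|_{L^\infty(\overline{B_{9R}})}$ times its measure and hence vanishes as $j\to\infty$.

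Each $\hat\varphi_j$ being admissible in the infimum defining $Q_{\mathcal{A}}^{8R}f(\bar z)$, one gets $Q_{\mathcal{A}}^{8R}f(\bar z)\le\dashint f(\bar z+\hat\varphi_j)\to\langle\nu_0,f\rangle$, whence $\langle\nu_{(t_0,x_0)},f\rangle\ge Q_{\mathcal{A}}^{8R}f(\langle\nu_{(t_0,x_0)},\operatorname{id}\rangle)$ at the chosen point. Carrying this out on a common full-measure set of Lebesgue points for a countable dense family of test functions and extending by density settles all $f\in C(\R^N)$. I expect the third step --- the simultaneous control of $\mathcal{A}$-freeness, periodicity, mean value and the uniform bound $8R$ --- to be the genuinely delicate point, with the blow-up and the final passage to the infimum being routine once the truncation is available.
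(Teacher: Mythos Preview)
Your overall architecture---localize the Young measure to a homogeneous one at a generic point, manufacture a periodic $\mathcal{A}$-free competitor with controlled $L^\infty$-norm, and plug it into the infimum defining $Q_{\mathcal{A}}^{8R}$---is exactly the right shape, and coincides in spirit with the paper's argument. The paper, however, does not build the competitor by hand. It invokes Proposition~3.9 of \cite{FM99} (together with an inspection of that proof) to obtain directly, for a.e.\ fixed point and every inner cube $\mathcal{Q}'\subset\subset(0,1)^{d+1}$, a periodic $\mathcal{A}$-free sequence $(\bar z_n)$ with $\|\bar z_n\|_{L^\infty}\le 4R$ and the correct mean, whose generated Young measure approximates $\nu_{(\tau,y)}$ up to an error of size $\|f\|_{L^\infty(B_{3R})}\,|(0,1)^{d+1}\setminus\mathcal{Q}'|$. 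A mollification then yields smooth periodic $\mathcal{A}$-free functions with the same $4R$ bound, and subtracting the mean produces admissible test fields bounded by $8R$. Thus the constant $8$ arises simply as $4R+4R$ from the FM99 construction plus centering, with no truncation at all.

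Your route through the potential $\mathcal{B}$ and the M\"uller--Zhang truncation has a concrete gap precisely at the constant. The truncation theorems in \cite{M99} and \cite{G21} produce, from a field with $|\{|\varphi_j|>2R\}|$ small, a modification $\hat\varphi_j$ with $\|\hat\varphi_j\|_{L^\infty}\le C_d\cdot 2R$ where $C_d$ is a \emph{dimensional} constant, not $4$; there is no mechanism in those lemmas that outputs a universal factor yielding $8R$. So your argument, as written, proves the Jensen inequality with $Q_{\mathcal{A}}^{C_dR}$ rather than $Q_{\mathcal{A}}^{8R}$. (For the downstream application in Theorem~\ref{theo:jensencond} this is harmless, since one immediately passes to $Q_{\mathcal{A}_E}\le Q_{\mathcal{A}_E}^{q}$ for any $q$; but it does not establish the proposition as stated.) Two smaller points: you tacitly assume that $\mathcal{A}$ admits a potential $\mathcal{B}$ and that the truncation machinery of \cite{G21} applies, which requires the potential to be of order at most two---neither hypothesis is in the statement of the proposition; and writing $v_j-\bar z=\mathcal{B}w_j$ on the blow-up cell already requires a solvability result on a bounded domain, which you should justify. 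The paper sidesteps all of this by outsourcing the delicate periodization-with-$L^\infty$-bound to \cite{FM99}.
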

\begin{proof}
	By Proposition~3.9 in \cite{FM99} and its proof there exists a set $\Omega_0\subset (0,T)\times \T^d$ of full measure such that for every $(\tau,y)\in\Omega_0$ and every subcube $\mathcal{Q}'\subset\subset (0,1)^{d+1}$ there exists an $\mathcal{A}$-free sequence $(\bar{z}_n)\in L^{\infty}(\T^{d+1})$ with $\|\bar{z}_n\|_{L^{\infty}}\leq 4R$ and $\underset{\T^{d+1}}{\dashint}\bar{z}_n=\left\langle\nu_{(\tau,y)},\operatorname{id}\right\rangle$, and such that $(\bar{z}_n)$ generates a Young measure $\mu$ satisfying
	\begin{align*}
		\left|\int\limits_{\T^{d+1}}^{}\left\langle\mu_{(t,x)},f\right\rangle\dt\dx-\left\langle\nu_{(\tau,y)},f\right\rangle \right|\leq \|f\|_{L^{\infty}\left(B_{3R}(0) \right)}\left|(0,1)^{d+1}\backslash \mathcal{Q}' \right|
	\end{align*}
	for all $f\in C_0(\R^m)$.\\
	Fix $(\tau,y)\in\Omega_0$. Let $f\in C_0(\R^m)$ and let $\eps>0$. Choose a subcube $\mathcal{Q}'\subset \subset (0,1)^{d+1}$ such that we have $\|f\|_{L^{\infty}}\left|(0,1)^{d+1}\backslash \mathcal{Q}'\right|\leq \eps$. Then the corresponding sequence $(\bar{z}_n)$ satisfies
	\begin{align*}
		\eps+\left\langle \nu_{(\tau,y)},f\right\rangle\geq \int\limits_{\T^{d+1}}^{}\left\langle\mu_{(t,x)},f\right\rangle\dt\dx=\lim\limits_{n\rightarrow\infty}\int\limits_{\T^{d+1}}^{}f(\bar{z}_n(t,x))\dt\dx.
	\end{align*}
	If $\eta_{\delta}$ denotes a standard mollifier, then $\eta_{\delta}\ast\bar{z}_n\rightarrow \bar{z}_n$ in $L^1(\T^{d+1})$. So, we can extract a subsequence such that $\eta_{\delta_n}\ast\bar{z}_n\overset{Y}{\rightharpoonup}\mu$. Observe that $\eta_{\delta_n}\ast\bar{z}_n\in C^{\infty}(\T^{d+1})$ is $\mathcal{A}$-free, has the same average as $\bar{z}_n$, and admits the same uniform bound $4R$. Therefore, by the definition of the truncated $\mathcal{A}$-quasiconvex envelope we have
	\begin{align*}
		\lim\limits_{n\rightarrow\infty}\int\limits_{\T^{d+1}}^{}f(\bar{z}_n(t,x))\dt\dx&=\lim\limits_{n\rightarrow\infty}\int\limits_{\T^{d+1}}^{}f((\eta_{\delta_n}\ast\bar{z}_n)(t,x))\dt\dx\\
		&\geq \underset{n\rightarrow\infty}{\limsup}\,Q_{\mathcal{A}}^{8R}f\left(\int\limits_{\T^{d+1}}^{}(\eta_{\delta_n}\ast\bar{z}_n)(t,x)\dt\dx \right)\\
		&=Q_{\mathcal{A}}^{8R}f\left(\left\langle\nu_{(\tau,y)},\operatorname{id}\right\rangle \right),
	\end{align*}
	where we used that the $\mathcal{A}$-free function $\eta_{\delta_n}\ast\bar{z}_n-\int\limits_{\T^{d+1}}^{}(\eta_{\delta_n}\ast\bar{z}_n)(t,x)\dt\dx$ is average-free and bounded by $8R$. Moreover, we used the continuity of the truncated quasiconvex envelope, which is not hard to check, cf. the proof of Proposition~3.4 in \cite{FM99}. As $\eps>0$ and $f$ were arbitrary, putting everything together yields
	\begin{align*}
		\left\langle\nu_{(\tau,y)},f\right\rangle\geq Q_{\mathcal{A}}^{8R}f\left(\left\langle\nu_{(\tau,y)},\operatorname{id}\right\rangle\right)
	\end{align*}
	for all $f\in C_0(\R^m)$. Note that $\supp\nu\subset B_R(0)$ almost everywhere by the boundedness of its generating sequence $(z_n)$. Furthermore, for two continuous functions $f$ and $g$ with $f=g$ on $B_{9R}(0)$ it is immediate from the definition of the truncated envelope that
	\begin{align*}
		Q_{\mathcal{A}}^{8R}f\left(\left\langle\nu_{(\tau,y)},\operatorname{id}\right\rangle \right)=Q_{\mathcal{A}}^{8R}g\left(\left\langle\nu_{(\tau,y)},\operatorname{id}\right\rangle \right).
	\end{align*}
	Since we also have $\left\langle\nu_{(\tau,y)},f\right\rangle=\left\langle\nu_{(\tau,y)},g\right\rangle$, the Jensen inequality (\ref{eq:generalizedjenseninequality}) holds for all $f\in C(\R^m)$.	
\end{proof}

With this at hand we are able to prove our main result Theorem~\ref{theo:jensencond}.
\begin{proof}[Proof of Theorem \ref{theo:jensencond}.]
	First consider the case of a low Mach sequence $(\rho^{\eps_n},u^{\eps_n})$. The functions $S\left(P_{\bar{\rho}}^{\eps_n}(\rho^{\eps_n},u^{\eps_n})\right)$ generate the lift $S_{\#}\mu$. Moreover, the sequence $\frac{1}{\eps_n}\left|(\rho^{\eps_n})^{\gamma}-\bar{\rho}^{\gamma}\right|$ is uniformly bounded. Thus, since $\eps_n\rightarrow 0$, we necessarily obtain $\rho^{\eps_n}\rightarrow \bar{\rho}$ almost everywhere on $(0,T)\times \T^d$. The uniform boundedness of $u^{\eps_n}$ then yields that
	\begin{align*}
		S\left(u^{\eps_n},\frac{1}{\eps_n\bar{\rho}}\left((\rho^{\eps_n})^{\gamma}-\bar{\rho}^{\gamma}\right)\right)-\frac{1}{\bar{\rho}}C^{\eps_n}_{\bar{\rho}}(\rho^{\eps_n},u^{\eps_n})\rightarrow 0
	\end{align*}
	almost everywhere. By standard Young measure theory we then obtain that $\frac{1}{\bar{\rho}}C^{\eps_n}_{\bar{\rho}}(\rho^{\eps_n},u^{\eps_n})\overset{Y}{\rightharpoonup}S_{\#}\mu$. Note that the sequence of lifts $\left(\frac{1}{\bar{\rho}}C^{\eps_n}_{\bar{\rho}}(\rho^{\eps_n},u^{\eps_n})\right)_{n\in\N}$ is $\mathcal{A}_E$-free and uniformly bounded, say by $R>0$.\\
	The case of weak incompressible solutions $u_n$ with corresponding uniformly bounded pressure $P_n$ is even easier, since then directly $S(u_n,P_n)$ is $\mathcal{A}_E$-free, generates $S_{\#}\mu$, and is uniformly bounded, say by $R>0$.\\
	In both cases we can apply Proposition~\ref{prop:generalizedjensencond} to get
	\begin{align*}
		\left\langle (S_{\#}\mu)_{(t,x)},f\right\rangle\geq Q_{\mathcal{A}_E}^{8R}f\left(\left\langle (S_{\#}\mu)_{(t,x)},\operatorname{id}\right\rangle \right)\geq Q_{\mathcal{A}_E}f\left(\left\langle (S_{\#}\mu)_{(t,x)},\operatorname{id}\right\rangle \right)
	\end{align*}
	for all $f\in C(\R^N)$ and all $(t,x)\in\Omega_0$, where $\Omega_0\subset (0,T)\times \T^d$ is a set of full measure, which finishes the proof.
\end{proof}
\begin{rem}
	We observed in Proposition~\ref{prop:characterizationapplication} that a Jensen-type condition on a low Mach sequence of Young measures $\nu^{\eps}$ generating $\mu$ is part of sufficient conditions for $\mu$ to be a low Mach limit of weak solutions.
	On the other hand, we derived a weaker Jensen condition as a necessary condition.\\
	It would be interesting to obtain a sufficient set of conditions including a Jensen condition only for $\mu$ itself and not for an already given low Mach sequence $\nu^{\eps}$. This should be compared to the problem of generating measure-valued solutions of the isentropic Euler system by weak solutions. Here, the same relaxed system (\ref{eq:relaxedeulersystem}) is used, but sufficient conditions for the limit measure itself can be found, cf. \cite{GW21}.\\
	Note also that for the two envelopes $Q_{\mathcal{A}_E}^qf$ and $Q_{\mathcal{B}_E}^qf$ of a continuous function $f$ one can only show that $Q_{\mathcal{A}_E}^qf\leq Q_{\mathcal{B}_E}^qf$. But for the converse inequality a Calder\'on-Zygmund estimate from $L^{\infty}$ to $L^{\infty}$ would be necessary, which in general does not hold. So, already conceptional gaps between sufficient and necessary conditions have to be bridged for a complete description of Young measures being low Mach limits.
\end{rem}

\section{The pressure-free case}\label{sect:thepressurefreecase}
For the special class of pressure-free solutions we can in fact already give sufficient conditions in the spirit of Proposition~\ref{prop:characterizationapplication} but only on the incompressible solution itself in order to infer the existence of a generating low Mach sequence of weak solutions. The reason for the simplicity of this situation lies in the observation that a pressure-free solution of (\ref{eq:incompressibleeuler}) is also a solution of (\ref{eq:isentropiceulereps}) for any $\eps>0$.\\
As we use the results from \cite{GW21} again, we constrain ourselves to the case of monoatomic gases, i.e.~$\gamma=1+\frac{2}{d}$ in this section, see also Remark~\ref{rem:monoatomic}.
\begin{prop}\label{prop:pressurefreesituation}
	Let $d\geq 2$ and $\gamma=1+\frac{2}{d}$. Assume $\nu$ is a Young measure with the following properties:
	\begin{itemize}
		\item The measure $\nu$ is an energy admissible augmented measure-valued solution of (\ref{eq:incompressibleeuler}) with initial data $u_0\in L^1(\T^d)$ such that $\operatorname{div}u_0=0$.
		\item There exists some $R>0$ such that
		\begin{align}
			\left\langle \left(S_\#\nu\right)_{(t,x)},f\right\rangle\geq Q^R_{\mathcal{B}_E}f\left(\left\langle \left(S_\#\nu\right)_{(t,x)},\operatorname{id}\right\rangle\right)\label{eq:pressurefreejensen}
		\end{align}
		for all $f\in C(\R^N)$ and all $(t,x)\in\Omega_0$, where $\Omega_0\subset (0,T)\times \T^d$ is a set of full measure.
		\item There exist $\sigma\in C\left([0,T]\times \T^d\right)$ and $w\in W^{2,\infty}\left((0,T)\times \T^d\right)$ such that
		\begin{align*}
			\left\langle S_\#\nu,\operatorname{id}\right\rangle=\sigma+\mathcal{B}_Ew.
		\end{align*}
		\item There exists a bounded and continuous non-negative function $\chi$ of time such that
		\begin{align*}
			\supp(\nu_{(t,x)})\subset \{(u,P)\,:\,|u|^2=\chi(t),\ P=0 \} 
		\end{align*}
		for a.e.~$(t,x)\in (0,T)\times \T^d$.
	\end{itemize}
	Then we obtain a low Mach sequence $(\rho^{\eps_k},u^{\eps_k})$ of energy admissible weak solutions such that the sequence $P_{1}^{\eps_k}(\rho^{\eps_k},u^{\eps_k})$ generates $\nu$.
\end{prop}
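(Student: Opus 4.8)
The key structural observation, stated in the paragraph preceding the proposition, is that a pressure-free solution of the incompressible system solves the compressible system for every $\eps$. Concretely, since $\supp(\nu_{(t,x)})\subset\{(u,P):|u|^2=\chi(t),\,P=0\}$, the pressure component is trivial, so the lift $S_\#\nu$ carries the same information as a lift of a density-velocity measure. The plan is to reduce the statement to Proposition~\ref{prop:characterizationapplication} by constructing, for each $\eps>0$, a compressible measure-valued solution $\nu^\eps$ of (\ref{eq:mvsisentropic}) whose incompressible low Mach limit recovers $\nu$.

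\textbf{First step: build the compressible measures.}
First I would define $\nu^\eps$ as the pushforward of $\nu$ under the natural embedding $(u,P)\mapsto(\bar\rho,u)$ with $\bar\rho=1$, i.e. concentrate the density at the constant value $\bar\rho=1$ while keeping the velocity marginal of $\nu$. Because $P\equiv 0$ on $\supp\nu$ and $\rho\equiv 1$, one checks directly that $\Theta^\eps(\nu^\eps)$ and $S_\#\nu$ agree up to the harmless relabeling of the generalized-pressure slot: the generalized pressure of the compressible lift is $\tfrac{1}{\eps}\rho^\gamma+\rho\tfrac{|u|^2}{d}=\tfrac{1}{\eps}+\tfrac{|u|^2}{d}$, which differs from the incompressible generalized pressure $P+\tfrac{|u|^2}{d}=\tfrac{|u|^2}{d}$ only by the spatially constant shift $\tfrac{1}{\eps}$. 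Since (\ref{eq:relaxedeulersystem}) does not perceive such a constant shift in $Q$, the measure $\nu^\eps$ solves (\ref{eq:mvsisentropic}); the pressure-free incompressible solution is genuinely a compressible solution for every $\eps$, as advertised.

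\textbf{Second step: verify the hypotheses of Proposition~\ref{prop:characterizationapplication}.}
Next I would check that the family $(\nu^\eps)$ satisfies all the bullet points of Proposition~\ref{prop:characterizationapplication}. The initial data are $(\rho_0^\eps,u_0^\eps)=(\bar\rho,u_0)$ for all $\eps$, which converge to $(\bar\rho,u_0)$ trivially (indeed strongly), so the convergence hypotheses on initial data are immediate. The support bound $\supp(\nu^\eps_{(t,x)})\subset\{|\rho-\bar\rho|\le\eps R,\,|u|^2\le\chi(t)\}$ holds with $\rho\equiv\bar\rho$ and the given $\chi$. The Jensen condition (\ref{eq:sufficientjensencond}) for $(\Theta^\eps)_\#\nu^\eps$ must be translated from the assumed condition (\ref{eq:pressurefreejensen}) for $S_\#\nu$; here the crucial point is that the constant shift of $\tfrac{1}{\eps}$ in the last coordinate of the lift is a translation of the state space, and the truncated quasiconvex envelope $Q^R_{\mathcal{B}_E}$ commutes with such translations (since adding a constant to the barycenter and to $f$'s argument cancels), so the two Jensen inequalities are equivalent. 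The structural decomposition $\langle(\Theta^\eps)_\#\nu^\eps,\id\rangle=\sigma^\eps+\mathcal{B}_E w^\eps$ follows from the assumed $\langle S_\#\nu,\id\rangle=\sigma+\mathcal{B}_E w$ by absorbing the constant shift into $\sigma^\eps$. Finally the energy admissibility and continuity of $(t,x)\mapsto\langle\nu^\eps,e_\eps\rangle$, and the relation $Q=\tfrac{2}{d}\langle\nu^\eps,e_\eps\rangle$ forced by $\gamma=1+\tfrac{2}{d}$, reduce to the given admissibility and continuity of $\chi$, using that $e_\eps=\tfrac12|u|^2+\tfrac{1}{\gamma-1}\tfrac{1}{\eps}$ is (up to the constant in $\eps$) a function of $|u|^2=\chi(t)$ alone.

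\textbf{Main obstacle and conclusion.}
Applying Proposition~\ref{prop:characterizationapplication} then yields weak solutions $(\rho_k^{\eps_k},u_k^{\eps_k})$ whose pressure lifts converge to an augmented solution $\mu$; the remaining task is to identify $\mu$ with $\nu$. Since the velocity marginals agree by construction and the pressure component of $\nu$ is deterministic ($P=0$), weak-strong type arguments together with the uniqueness of the limit pin down $\mu=\nu$. I expect the main obstacle to be precisely the bookkeeping in the second step: ensuring that the constant $\tfrac1\eps$ shift in the generalized-pressure coordinate is handled consistently across the Jensen inequality, the potential decomposition, and the energy identity, and confirming that the normalization $\bar\rho=1$ (which is why the conclusion uses $P_1^{\eps_k}$ rather than $P_{\bar\rho}^{\eps_k}$) is compatible with the $Q=\tfrac2d\langle\nu^\eps,e_\eps\rangle$ constraint that the monatomic exponent $\gamma=1+\tfrac2d$ imposes. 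Once these translations are verified, the conclusion is a direct invocation of Proposition~\ref{prop:characterizationapplication}.
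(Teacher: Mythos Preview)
Your approach is essentially identical to the paper's: define $\nu^\eps=\delta_1\otimes\operatorname{pr}_u(\nu)$, observe that $(\Theta^\eps)_\#\nu^\eps$ is the constant shift of $S_\#\nu$ by $(0,0,0,\tfrac1\eps)$, translate each hypothesis of Proposition~\ref{prop:characterizationapplication} across this shift, and apply that proposition. The paper carries out exactly this bookkeeping.

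There is one misstep at the end. You propose to identify the limit $\mu$ produced by Proposition~\ref{prop:characterizationapplication} with $\nu$ via ``weak-strong type arguments.'' That cannot work here: $\nu$ is not a strong solution, and Proposition~\ref{prop:wsuniqueness} only determines the velocity marginal, not the pressure. The correct identification is elementary and structural: since $\rho\equiv 1$ on $\supp\nu^\eps$, the pressure lift satisfies $(P_1^\eps)_\#\nu^\eps=\operatorname{pr}_u(\nu)\otimes\delta_0=\nu$ for \emph{every} $\eps$. Hence for each fixed $\eps$ the weak solutions $(\rho_j^\eps,u_j^\eps)$ produced inside Proposition~\ref{prop:characterizationapplication} satisfy $P_1^\eps(\rho_j^\eps,u_j^\eps)\overset{Y}{\rightharpoonup}\nu$ as $j\to\infty$, and a diagonal choice gives $P_1^{\eps_k}(\rho^{\eps_k}_{j(k)},u^{\eps_k}_{j(k)})\overset{Y}{\rightharpoonup}\nu$. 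The paper does not spell this out either---it simply says applying Proposition~\ref{prop:characterizationapplication} ``yields the assertion''---but this is the mechanism, not weak-strong uniqueness.
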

\begin{proof}
	Define the measures
	\begin{align*}
		\nu^{\eps}:=\delta_1\otimes \operatorname{pr}_u(\nu)
	\end{align*}
	for all $\eps>0$. We check all prerequisites of Proposition~\ref{prop:characterizationapplication} and apply this to the sequence $(\nu^{\eps})_{\eps>0}$ which then yields the assertion.\\
	First, notice that it is straightforward to check that $\nu^{\eps}$ solves (\ref{eq:isentropiceulereps}) with initial data $(1,u_0)$, since the density is constant and equal to one. The convergence of the initial data is therefore trivial. Now let $f\in C(\R^N)$ and observe that
	\begin{align*}
		\left\langle (\Theta^{\eps})_\#\nu^{\eps},f\right\rangle&=\left\langle\delta_1\otimes\operatorname{pr}_u(\nu),f\left(\rho,\rho u,\rho u\ocircle u,\frac{\rho|u|^2}{d}+\frac{1}{\eps}\rho^{\gamma} \right) \right\rangle\\
		&=\left\langle\operatorname{pr}_u(\nu)\otimes \delta_0,f\left(1,u,u\ocircle u,P+\frac{|u|^2}{d}+\frac{1}{\eps} \right) \right\rangle\\
		&=\left\langle S_\#\nu,f\left((\rho,u,M,Q)+\left(0,0,0,\frac{1}{\eps} \right) \right)\right\rangle.
	\end{align*}
	Here we used that $\nu=\operatorname{pr}_u(\nu)\otimes\delta_0$ holds due to the support properties of $\nu$. 
	Therefore, by (\ref{eq:pressurefreejensen}) for $(t,x)\in \Omega_0$ it holds that
	\begin{align*}
		\left\langle \left((\Theta^{\eps})_\#\nu^{\eps}\right)_{(t,x)},f\right\rangle&=\left\langle \left(S_\#\nu\right)_{(t,x)},f\left(\operatorname{id}+\left(0,0,0,\frac{1}{\eps} \right) \right)\right\rangle\\
		&\geq Q^R_{\mathcal{B}_E}f\left(\left\langle\left(S_\#\nu\right)_{(t,x)},\operatorname{id}\right\rangle+\left(0,0,0,\frac{1}{\eps} \right)\right)\\
		&= Q^R_{\mathcal{B}_E}f\left(\left\langle\left((\Theta^{\eps})_\#\nu^{\eps}\right)_{(t,x)},\operatorname{id}\right\rangle\right),
	\end{align*}
	where we used the fact that $f\left(\bullet+\left(0,0,0,\frac{1}{\eps} \right) \right)$ is again a continuous function on $C(\R^N)$. This yields the validity of (\ref{eq:sufficientjensencond}).\\
	Next, observe that
	\begin{align*}
		\left\langle(\Theta^{\eps})_\#\nu^{\eps},\operatorname{id}\right\rangle=\left\langle S_\#\nu,\operatorname{id}\right\rangle+\left(0,0,0,\frac{1}{\eps} \right)=\sigma^{\eps}+\mathcal{B}_Ew.
	\end{align*}
	Here, we defined $\sigma^{\eps}:=\sigma+\left(0,0,0,\frac{1}{\eps} \right)\in C\left([0,T]\times \T^d\right)$.\\
	The support properties of $\nu^{\eps}$ are also fitting into the setting of Proposition~\ref{prop:characterizationapplication} since on the support of $(\Theta^{\eps})_\#\nu^{\eps}$ it holds that $\rho=1$, $|u|^2=\chi$, and $Q=\frac{\rho|u|^2}{d}+\frac{1}{\eps}\rho^{\gamma}=\frac{1}{d}\chi+\frac{1}{\eps}=\frac{2}{d}\left\langle \nu^{\eps},e_{\eps}\right\rangle$. It is clear that for all $\eps>0$ the function $\left\langle\nu^{\eps},e_{\eps}\right\rangle=\frac{1}{2}\chi+\frac{d}{2}\frac{1}{\eps}$ is continuous. It only remains to prove that $\nu^{\eps}$ is energy admissible. But this is straightforward to check as $\nu$ is energy admissible.
\end{proof}
We can find examples in terms of di-atomic measures for the above situation if we assume the existence of certain weak solutions, cf. Remark \ref{rem:chiodaroliconjecture} below.
\begin{cor}\label{cor:pressurefreeexample}
	Let $d=2$ and $\gamma=2$. Let $u_0\in L^1(\T^d)$ with $\operatorname{div}u_0=0$ such that there exist uniformly bounded energy admissible weak solutions $u_1\neq u_2$ of (\ref{eq:incompressibleeuler}) with pressure identically zero and initial data $u_0$. Moreover, suppose that there exists some bounded and continuous non-negative function $\chi$ of time such that $|u_1|^2=\chi=|u_2|^2$ almost everywhere. Let further $\lambda\in (0,1)$ be such that $\lambda S(u_1,0)+(1-\lambda)S(u_2,0)=\sigma+\mathcal{B}_Ew$ for some $\sigma\in C\left([0,T]\times \T^d\right)$ and $w\in W^{2,\infty}\left((0,T)\times \T^d\right)$.\\
	Then there exists a low Mach sequence $(\rho^{\eps_k},u^{\eps_k})$ of energy admissible weak solutions such that the pressure lifts $P_{1}^{\eps_k}(\rho^{\eps_k},u^{\eps_k})$ generate the augmented measure-valued solution $\nu:=\lambda\delta_{(u_1,0)}+(1-\lambda)\delta_{(u_2,0)}$ of (\ref{eq:incompressibleeuler}).
\end{cor}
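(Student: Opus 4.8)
The plan is to verify that the candidate measure $\nu:=\lambda\delta_{(u_1,0)}+(1-\lambda)\delta_{(u_2,0)}$ satisfies all four hypotheses of Proposition~\ref{prop:pressurefreesituation}, after which that proposition immediately yields the desired low Mach sequence. This reduces the corollary to a checklist, so the work is really in confirming each bullet rather than constructing anything new.

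First I would observe that $\nu$ is an energy admissible augmented measure-valued solution of (\ref{eq:incompressibleeuler}). Since $u_1$ and $u_2$ are both weak solutions with pressure identically zero and common initial data $u_0$, their Dirac masses $\delta_{(u_1,0)}$ and $\delta_{(u_2,0)}$ are augmented solutions in the sense of Definition~\ref{defi:mvsdefinitions}(ii), and the defining weak formulation is linear in $\mu$, so the convex combination $\nu$ is again an augmented solution with initial data $u_0$. The condition $\langle\nu,|u|^2\rangle\in L^\infty$ follows from the uniform boundedness of $u_1,u_2$. Energy admissibility likewise passes to the convex combination because the energy functional $\mu\mapsto\esssup_t\int_{\T^d}\langle\mu_{(t,x)},e\rangle\,\dx$ is convex in $\mu$ and both atoms share the same admissibility bound $\int_{\T^d}e(u_0)\,\dx$.

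Next I would address the support and Jensen hypotheses. The support condition $\supp(\nu_{(t,x)})\subset\{(u,P):|u|^2=\chi(t),\ P=0\}$ holds directly: both atoms sit on $P=0$ by assumption, and $|u_1|^2=\chi=|u_2|^2$ a.e. places them on the prescribed sphere. The decomposition hypothesis $\langle S_\#\nu,\operatorname{id}\rangle=\sigma+\mathcal{B}_Ew$ is exactly what is assumed in the corollary, once one notes that $\langle S_\#\nu,\operatorname{id}\rangle=\lambda S(u_1,0)+(1-\lambda)S(u_2,0)$ since $S$ acts pointwise and $\nu$ is di-atomic. For the Jensen inequality (\ref{eq:pressurefreejensen}), the key point is that $S_\#\nu=\lambda\delta_{S(u_1,0)}+(1-\lambda)\delta_{S(u_2,0)}$ is supported on exactly two points $z_1:=S(u_1,0)$ and $z_2:=S(u_2,0)$ whose difference lies in the wave cone of $\mathcal{A}_E$ (equivalently, $z_1-z_2\in\image\mathbb{B}_E(\xi)$ for some $\xi$), since a di-atomic measure arising from genuine weak solutions of the relaxed system is $\mathcal{A}_E$-wave-cone-connected; this is the laminate structure underlying such constructions. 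For a two-point laminate along an admissible direction one can realize $z_1,z_2$ as values of a bounded $\mathcal{A}_E$-free (hence $\mathcal{B}_E$-potential-generated) oscillation with barycenter $\langle S_\#\nu,\operatorname{id}\rangle$, which by the definition of $Q^R_{\mathcal{B}_E}$ gives $\langle S_\#\nu,f\rangle\geq Q^R_{\mathcal{B}_E}f(\langle S_\#\nu,\operatorname{id}\rangle)$ for a suitable $R$ controlled by the uniform bound on $u_1,u_2$.

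With all four hypotheses verified, I would invoke Proposition~\ref{prop:pressurefreesituation} to obtain the low Mach sequence $(\rho^{\eps_k},u^{\eps_k})$ of energy admissible weak solutions whose pressure lifts $P_1^{\eps_k}(\rho^{\eps_k},u^{\eps_k})$ generate $\nu$. The main obstacle I anticipate is the Jensen verification: one must exhibit the explicit bounded potential oscillation realizing the two-point laminate and confirm that its amplitude stays within the truncation threshold $R$, which requires knowing that $z_1-z_2$ genuinely lies in a single fiber $\image\mathbb{B}_E(\xi)$. This is where the assumption $P\equiv0$ together with $|u_1|^2=|u_2|^2$ is essential, since it forces the fourth (generalized pressure) component of $z_1-z_2$ to vanish and aligns the jump with the geometry of the relaxed Euler wave cone. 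Everything else is a routine transfer of admissibility and measurability through the convex combination.
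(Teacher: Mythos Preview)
Your overall architecture is exactly the paper's: verify the four bullets of Proposition~\ref{prop:pressurefreesituation} and invoke it. The first, third, and fourth bullets are handled just as in the paper, so there is nothing to add there.

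The gap is in your verification of the Jensen condition~(\ref{eq:pressurefreejensen}). You assert that $z_1-z_2$ lies in the $\mathcal{A}_E$-wave cone because ``a di-atomic measure arising from genuine weak solutions of the relaxed system is $\mathcal{A}_E$-wave-cone-connected''. This is false as a general principle, and in fact the paper's Section~\ref{sect:examplesviolatingthejensencondition} is devoted precisely to exhibiting pairs of weak solutions whose lifts are \emph{not} wave-cone-connected (namely whenever $u_1\neq u_2$ and $P_1\neq P_2$ on a set of positive measure; see equation~(\ref{eq:waveconeconnectedcalculation})). So being a convex combination of weak solutions is not by itself enough. Your closing remark that $P\equiv 0$ and $|u_1|^2=|u_2|^2$ force the $Q$-component of $z_1-z_2$ to vanish is a correct and relevant observation, but you stop short of turning it into an actual proof that $z_1-z_2$ lies in some $\ker\mathbb{A}_E(\xi)$.

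The paper closes this gap differently: it notes that the $\rho$-components of $z_1=S(u_1,0)$ and $z_2=S(u_2,0)$ are both equal to $1$, and then cites Remark~4.10 in \cite{GW20} to conclude that equal densities force $(z_1-z_2)\in\bigcup_{|\xi|=1}\ker\mathbb{A}_E(\xi)$ almost everywhere. Lemma~5.6 in \cite{GW21} then converts this to $(z_1-z_2)\in\bigcup_{|\xi|=1}\image\mathbb{B}_E(\xi)$, and Lemma~3.2 in \cite{GW21} produces the truncated Jensen inequality~(\ref{eq:pressurefreejensen}) with $R=C\|z_1-z_2\|_{L^\infty}$ directly from the laminate structure, rather than by constructing an explicit oscillation by hand as you propose. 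So your plan is on the right track, but the wave-cone step needs the equal-density observation (or, equivalently, the computation $P_1-P_2=0$ in~(\ref{eq:waveconeconnectedcalculation})) in place of the incorrect general claim, and the passage from wave-cone-connectedness to the truncated $\mathcal{B}_E$-Jensen inequality should be grounded in the cited lemmas rather than left as a sketch.
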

\begin{proof}
	It is straightforward to check that $\nu$ is an energy admissible augmented measure-valued solution with initial data $u_0$. Note that the densities of the states $z_1:=S(u_1,0)$ and $z_2:=S(u_2,0)$ are both equal to one. Thus, by Remark 4.10 in \cite{GW20} we obtain that $(z_1-z_2)\in \underset{|\xi|=1}{\bigcup}\ker \mathbb{A}_E(\xi)$ almost everywhere. By Lemma~5.6 in \cite{GW21} this implies that $(z_1-z_2)\in \underset{|\xi|=1}{\bigcup}\image \mathbb{B}_E(\xi)$ almost everywhere. Now we apply Lemma~3.2 in \cite{GW21} to infer that
	\begin{align*}
		\left\langle S_\#\nu,f\right\rangle=\lambda f(z_1)+(1-\lambda)f(z_2)\geq Q_{\mathcal{B}_E}^{C\|z_1-z_2\|_{L^{\infty}}}f(\lambda z_1+(1-\lambda)z_2)=Q_{\mathcal{B}_E}^{C\|z_1-z_2\|_{L^{\infty}}}f\left(\left\langle S_\#\nu,\operatorname{id}\right\rangle \right)
	\end{align*}
	for all $f\in C(\R^N)$ on a subset of $(0,T)\times \T^d$ of full measure. Since both $z_1$ and $z_2$ are uniformly bounded, this implies (\ref{eq:pressurefreejensen}).\\
	By assumption it holds that $\langle S_\#\nu,\operatorname{id}\rangle=\sigma+\mathcal{B}_Ew$ and for all $(u,P)$ in the support of $\nu$ we have $P=0$ and $|u|^2=|u_i|^2=\chi$ for some $i\in\{1,2\}$. Therefore, we checked all assumptions of Proposition~\ref{prop:pressurefreesituation} and the assertion follows.
\end{proof}

\begin{rem}\label{rem:chiodaroliconjecture}
	The method of convex integration gives us the following:\\
	By choosing the pressure function $\rho\mapsto P(\rho)$ arbitrarily and $\rho_0\equiv 1$ in Theorem~2.1 in \cite{C14} there exists some initial data $u_0\in L^{\infty}(\T^2)$ giving rise to infinitely many weak solutions $(1,u)$ of (\ref{eq:isentropiceulereps}) with constant density and the property that $|u|^2=\chi$ and $|u_0|^2=\chi(0)$ for some fixed smooth and bounded function of time $\chi$. Hence, the functions $u$ are weak solutions of (\ref{eq:isentropiceulereps}) with constant pressure. So, they solve the equations also for pressure identically zero. 
	In particular, by Theorem~2.2 in \cite{C14} the function $\chi$ can be chosen such that the solutions $u$ are energy admissible. Since the density is constant, we necessarily have that $u_0$ is divergence-free.\\
	This ensures the existence of (infinitely many) examples for applying Corollary~\ref{cor:pressurefreeexample} if we can also guarantee that some convex combination of two such weak solutions can be written as $\sigma+\mathcal{B}_Ew$ for $\sigma$ continuous and $w\in W^{2,\infty}$. Unfortunately, there is no rigorous proof of this statement yet, but we conjecture that this should indeed be the case if one applies the convex integration scheme carefully enough, cf. Remark 5.5 in \cite{GW21}.
\end{rem}

\section{Examples violating the Jensen condition}\label{sect:examplesviolatingthejensencondition}
We now describe a class of measure-valued solutions violating the Jensen condition (\ref{eq:jensencondition}). In particular this condition is not trivially fulfilled if we can find at least one solution in this class. Such examples will be given in Corollary~\ref{cor:example}, below.
\begin{prop}\label{prop:notwaveconeconnexample}
	Let $d=2$ and $\lambda\in (0,1)$. Let $u_1,u_2$ be weak solutions of (\ref{eq:incompressibleeuler}) for the same initial data with corresponding pressures $P_1$ and $P_2$, respectively. Then the augmented measure-valued solution $\nu:=\lambda \delta_{(u_1,P_1)}+(1-\lambda)\delta_{(u_2,P_2)}$ does not fulfill the Jensen condition (\ref{eq:jensencondition}) for all $f\in C(\R^N)$ if and only if $u_1\neq u_2$ and $P_1\neq P_2$ on a set with positive Lebesgue measure.
\end{prop}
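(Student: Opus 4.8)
The plan is to reduce the Jensen condition (\ref{eq:jensencondition}) for the di-atomic measure $\nu$ to a purely algebraic statement about $\mathcal{A}_E$-wave-cone connectedness of the two lifted states, and then to evaluate that statement explicitly in dimension two. Writing $z_i:=S(u_i,P_i)$, we have $S_\#\nu=\lambda\delta_{z_1}+(1-\lambda)\delta_{z_2}$, so that $\langle S_\#\nu,f\rangle=\lambda f(z_1)+(1-\lambda)f(z_2)$ and $\langle S_\#\nu,\operatorname{id}\rangle=\lambda z_1+(1-\lambda)z_2=:\bar z$. At a fixed point $(t,x)$ the inequality (\ref{eq:jensencondition}) therefore reads $\lambda f(z_1)+(1-\lambda)f(z_2)\geq Q_{\mathcal{A}_E}f(\bar z)$. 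First I would prove the pointwise equivalence that this holds for all $f\in C(\R^N)$ if and only if $z_1-z_2$ is $\mathcal{A}_E$-wave-cone-connected. The ``if'' direction is exactly the argument used in the proof of Corollary~\ref{cor:pressurefreeexample}: connectedness gives $z_1-z_2\in\bigcup_{|\xi|=1}\image\mathbb{B}_E(\xi)$ via Lemma~5.6 in \cite{GW21}, whence Lemma~3.2 in \cite{GW21} produces $\lambda f(z_1)+(1-\lambda)f(z_2)\geq Q_{\mathcal{B}_E}^{C\|z_1-z_2\|_{L^\infty}}f(\bar z)\geq Q_{\mathcal{A}_E}f(\bar z)$. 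The ``only if'' direction is the separation principle behind \cite{CFKW15,GW20}: if $z_1-z_2$ lies outside the wave cone, then $\lambda\delta_{z_1}+(1-\lambda)\delta_{z_2}$ is not an $\mathcal{A}_E$-free homogeneous Young measure, so by the Fonseca--Müller characterization (Theorem~4.1 in \cite{FM99}) some $f$ must violate the inequality.

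Next I would carry out the wave-cone computation for $d=2$. Setting $a:=u_1-u_2$ and $b:=u_1+u_2$, the difference vector is
\[
z_1-z_2=\Big(0,\ a,\ u_1\ocircle u_1-u_2\ocircle u_2,\ (P_1-P_2)+\tfrac{|u_1|^2-|u_2|^2}{2}\Big),
\]
whose density component vanishes. Writing the space-time frequency as $(\tau,k)\in\R\times\R^2$, membership of $(0,m,M,Q)$ in $\ker\mathbb{A}_E(\tau,k)$ means $\tau m+(M+Q\mathbb{E}_2)k=0$ together with $k\cdot m=0$. The latter forces $k\perp a$; if $a\neq0$ then $k\neq0$ is necessary, and in two dimensions $k$ must then be a multiple $sa^\perp$ of $a^\perp$ with $s\neq0$. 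Using $u_1\otimes u_1-u_2\otimes u_2=\tfrac12(a\otimes b+b\otimes a)$ one computes
\[
M+Q\mathbb{E}_2=u_1\otimes u_1-u_2\otimes u_2+(P_1-P_2)\mathbb{E}_2,\qquad (M+Q\mathbb{E}_2)a^\perp=\tfrac12(b\cdot a^\perp)\,a+(P_1-P_2)\,a^\perp,
\]
so the first relation $\tau a+s(M+Q\mathbb{E}_2)a^\perp=0$ is solvable with $s\neq0$ exactly when its $a^\perp$-component vanishes, i.e. when $P_1=P_2$. When $a=0$ the pair $(\tau,k)=(1,0)$ lies in the kernel, so connectedness always holds. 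Hence $z_1-z_2$ fails to be wave-cone-connected at $(t,x)$ precisely when $u_1\neq u_2$ and $P_1\neq P_2$ there.

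Combining the two steps, at almost every $(t,x)$ the inequality (\ref{eq:jensencondition}) holds for all $f$ if and only if $u_1=u_2$ or $P_1=P_2$ at that point. The implication ``$\nu$ violates (\ref{eq:jensencondition}) $\Rightarrow$ $u_1\neq u_2$ and $P_1\neq P_2$ on a positive-measure set'' is then immediate: if some $f$ violates the inequality on a set $E$ of positive measure, every point of $E$ fails to be wave-cone-connected, so $E$ is contained, up to a null set, in $\{u_1\neq u_2\}\cap\{P_1\neq P_2\}$.

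The converse implication is where I expect the main obstacle: the pointwise existence of a violating $f$ at each point of the positive-measure set $\{u_1\neq u_2\}\cap\{P_1\neq P_2\}$ must be upgraded to a \emph{single} $f\in C(\R^N)$ violating (\ref{eq:jensencondition}) on a set of positive measure. I would first restrict to a positive-measure subset on which $z_1,z_2$ are bounded, so that the relevant states lie in a fixed compact set. Then, using the separability of $C(\R^N)$, I would exhaust it by a countable dense family $(f_j)$ and observe that each bad set $\{(t,x):f_j\text{ violates }(\ref{eq:jensencondition})\}$ is measurable, so a positive-measure bad set forces one of these countably many sets to carry positive measure, supplying the required $f_j$. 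The delicate point is the interaction between the open-ness of the strict violation and the growth of $f$ at infinity entering $Q_{\mathcal{A}_E}f$; I would control this using the explicit, controlled-growth separating functions of the \cite{CFKW15,GW20} construction together with the localization property of the truncated envelopes $Q^q_{\mathcal{A}_E}$ already exploited in the proof of Proposition~\ref{prop:generalizedjensencond}.
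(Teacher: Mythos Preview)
Your overall strategy---reduce to wave-cone connectedness of $z_1-z_2$ and compute that explicitly for $d=2$---is exactly what the paper does, and your computation of the connectedness criterion is correct. The differences lie in how the two implications are executed.

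For the direction ``wave-cone-connected $\Rightarrow$ Jensen holds'', you pass through Lemma~5.6 and Lemma~3.2 of \cite{GW21} and the truncated $\mathcal{B}_E$-envelope, mirroring Corollary~\ref{cor:pressurefreeexample}. The paper instead uses the more elementary fact (Proposition~3.4 in \cite{FM99}) that $Q_{\mathcal{A}_E}f$ is convex along any wave-cone direction, so the classical Jensen inequality on the segment $[z_1,z_2]$ suffices. Both arguments are valid; the paper's avoids the potential machinery entirely.

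For the direction ``not wave-cone-connected on a positive-measure set $\Rightarrow$ Jensen fails for some $f$'', you argue pointwise and then face the uniformization problem you flag: passing from a point-dependent violating $f$ to a single $f$ working on a set of positive measure. Your proposed separability fix is delicate precisely for the reason you give---$Q_{\mathcal{A}_E}f$ is not stable under local-uniform approximation of $f$---and the detour through truncated envelopes would essentially rebuild parts of \cite{GW20}. The paper sidesteps this completely: it invokes Theorem~3.5 of \cite{GW20} \emph{globally} to conclude that $S_\#\nu$ cannot be generated by any $\mathcal{A}_E$-free equiintegrable sequence, and then the full Fonseca--M\"uller characterization (Theorem~4.1 in \cite{FM99}), applied to the Young measure on $(0,T)\times\T^d$ with compact support and $\mathcal{A}_E$-free barycenter, directly yields a single $f$ of linear growth violating the Jensen inequality on a positive-measure set. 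No pointwise-to-global step is needed. This also immediately gives the content of the paper's remark that linear-growth test functions already suffice, which your route would recover only after additional work.
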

\begin{rem}
	Actually, from the proof of Proposition~\ref{prop:notwaveconeconnexample} we obtain that a di-atomic measure satisfies the Jensen condition (\ref{eq:jensencondition}) for all $f\in C(\R^N)$ if and only if it satisfies (\ref{eq:jensencondition}) for all $f\in C(\R^N)$ with linear growth. In particular, di-atomic measures consisting of weak solutions with non-wave-cone-connected lifts cannot be generated by a low Mach number limit, a sequence of weak solutions, or an energy admissible vanishing viscosity sequence, cf. (\ref{eq:waveconeconnectedcalculation}) below.
\end{rem}
\begin{proof}[Proof of Proposition \ref{prop:notwaveconeconnexample}]
	An elementary computation shows that the incompressible lifts $z_i:=S(u_i,P_i)$ are $\mathcal{A}_E$-wave-cone-connected if and only if
	\begin{align}
		0=\det\left(\left(\sum\limits_{j=1}^{N}A_{ij}^l\left(z_1^j-z_2^j\right) \right)_{jl}\right)=-|u_1-u_2|^2\cdot(P_1-P_2)\label{eq:waveconeconnectedcalculation},
	\end{align}
	where $A^l$ are the coefficient matrices of $\mathcal{A}_E$, see e.g. \cite{GW20}. Thus, it suffices to show that the states $z_1$ and $z_2$ are wave-cone-connected almost everywhere if and only if the Jensen condition (\ref{eq:jensencondition}) holds.\\
	Assume first that the states $z_1$ and $z_2$ are not wave-cone-connected on a set with positive measure. Then by Theorem~3.5 in \cite{GW20} the Young measure $S_{\#}\nu=\lambda\delta_{S(u_1,P_1)}+(1-\lambda)\delta_{S(u_2,P_2)}$ cannot be generated by $\mathcal{A}_E$-free equiintegrable sequences. Hence, as $S_{\#}\nu$ has compact support and an $\mathcal{A}_E$-free barycenter, by Theorem~4.1 in \cite{FM99} the Jensen condition
	\begin{align*}
		\langle S_{\#}\nu,f\rangle\geq Q_{\mathcal{A}_E}f(\langle S_{\#}\nu,\operatorname{id}\rangle)
	\end{align*}
	must be violated for some $f\in C(\R^N)$ with linear growth on a set of positive measure. In particular, if the Jensen condition (\ref{eq:jensencondition}) holds almost everywhere and for all continuous functions, then $z_1$ and $z_2$ must be wave-cone-connected almost everywhere.\\
	Conversely, if $z_1$ and $z_2$ are wave-cone-connected almost everywhere, then we know for all $f\in C(\R^N)$ from Proposition~3.4 in \cite{FM99} that $Q_{\mathcal{A}_E}f$ is convex on the line $[z_1(t,x),z_2(t,x)]$ for almost every $(t,x)$. Therefore, by using the classical Jensen inequality we have
	\begin{align*}
		\left\langle (S_{\#}\nu)_{(t,x)},f\right\rangle\geq \left\langle (S_{\#}\nu)_{(t,x)},Q_{\mathcal{A}_E}f\right\rangle\geq Q_{\mathcal{A}_E}f\left(\left\langle (S_{\#}\nu)_{(t,x)},\operatorname{id}\right\rangle\right).
	\end{align*}
	This finishes the proof.
\end{proof}

We find examples for applying the above result by the method of convex integration.
\begin{cor}\label{cor:example}
	Let $d=2$. There exists some initial data $u_0\in L^{\infty}(\T^d)$ with $\operatorname{div}u_0=0$ which gives rise to an augmented measure-valued solution that does not fulfill the Jensen condition (\ref{eq:jensencondition}) on a set of positive measure.
\end{cor}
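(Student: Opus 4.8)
The plan is to reduce the statement to Proposition~\ref{prop:notwaveconeconnexample} and then invoke convex integration to produce the pair of weak solutions it calls for. Since $d=2$, Proposition~\ref{prop:notwaveconeconnexample} tells us that a di-atomic measure $\nu := \lambda\delta_{(u_1,P_1)} + (1-\lambda)\delta_{(u_2,P_2)}$, with $\lambda\in(0,1)$ and $u_1,u_2$ weak solutions of (\ref{eq:incompressibleeuler}) for a common divergence-free initial datum $u_0$, violates the Jensen condition (\ref{eq:jensencondition}) for some $f\in C(\R^N)$ exactly when $u_1\ne u_2$ and $P_1\ne P_2$ on a set of positive measure, i.e. when the lifts $S(u_1,P_1)$ and $S(u_2,P_2)$ are not $\mathcal{A}_E$-wave-cone-connected, cf. (\ref{eq:waveconeconnectedcalculation}). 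Moreover such a $\nu$ is automatically an augmented measure-valued solution with initial data $u_0$: it is a convex combination of the Dirac masses $\delta_{(u_i,P_i)}$, each a weak and hence augmented solution, and linearity of the defining identities in Definition~\ref{defi:mvsdefinitions}(ii) together with the shared initial datum transfers those identities to $\nu$. Thus everything reduces to constructing one $u_0\in L^{\infty}(\mathbb{T}^2)$ with $\operatorname{div}u_0=0$ carrying two bounded weak solutions whose velocities \emph{and} pressures differ.

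For the construction I would fix $u_0$ in the $L^2$-dense set of initial data for which the convex integration scheme of De~Lellis and Sz\'ekelyhidi \cite{DS09,DS10,SW12} yields infinitely many bounded weak solutions of (\ref{eq:incompressibleeuler}). Any two distinct such solutions $u_1\ne u_2$ already differ on a set of positive measure, and their pressures are then fixed nonlocally by $P_i=(-\Delta)^{-1}\operatorname{div}\operatorname{div}(u_i\otimes u_i)$. The only remaining thing to secure is that the pressures $P_1$ and $P_2$ do not coincide almost everywhere.

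The hard part is exactly this last point, and it is genuinely a point rather than a formality: because the incompressible pressure is slaved to the velocity, two different velocity fields may well produce the same Leray pressure, namely whenever $\operatorname{div}\operatorname{div}(u_1\otimes u_1-u_2\otimes u_2)\equiv 0$, and in that degenerate case Proposition~\ref{prop:notwaveconeconnexample} would instead force the Jensen condition to hold. To exclude it I would use the freedom built into the subsolution underlying the construction rather than an abstract counting argument: one can arrange the two solutions so that their trace-free parts $u_1\ocircle u_1$ and $u_2\ocircle u_2$ differ by a field that is not annihilated by $\operatorname{div}\operatorname{div}$, which is precisely the requirement $P_1\ne P_2$ on a positive-measure set. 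This is the same mechanism producing the non-wave-cone-connected di-atomic examples in the compressible works \cite{CFKW15,GW20}, transported to the incompressible system, and it is where the actual work beyond the formal reduction is concentrated.
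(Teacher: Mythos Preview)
Your reduction to Proposition~\ref{prop:notwaveconeconnexample} is correct, and you have correctly isolated the only nontrivial issue: ensuring that the two pressures differ on a set of positive measure. However, your proposed resolution of that point is not an argument but a hope---``arrange the two solutions so that their trace-free parts differ by a field not annihilated by $\operatorname{div}\operatorname{div}$''---and you yourself flag it as where the real work lies. The paper closes this gap by a concrete and much simpler device that you are missing.

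Instead of comparing two generic convex integration solutions, the paper takes $u_0\equiv 0$ and chooses as one solution the trivial one $(u_1,P_1)=(0,0)$. For the second solution it invokes the specific construction of Proposition~2 in \cite{DS10}: starting from the zero subsolution and a continuous energy profile $\bar e$ supported on a proper subcube $\Omega\subset\T^2$ with $\bar e>0$ on $\Omega$, one obtains infinitely many bounded weak solutions $u$ with the \emph{explicit} pressure $P'=-\tfrac{1}{2}|u|^2=-\bar e\,\mathds{1}_\Omega$ (then shifted to be average-free). Because $\bar e$ is continuous, nonnegative, vanishes on $\partial\Omega$ and is positive inside, the average-free pressure $P$ is not identically zero, and $|u|^2=2\bar e$ forces $u\ne 0$ on $\Omega$. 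Hence $u_1\ne u_2$ and $P_1\ne P_2$ on sets of positive measure come for free, with no need to control $\operatorname{div}\operatorname{div}(u_1\otimes u_1-u_2\otimes u_2)$ by hand. The key point you should take away is that the De~Lellis--Sz\'ekelyhidi solutions built from a prescribed kinetic energy profile have their pressure written down explicitly, and comparing against the zero solution turns your ``hard part'' into a one-line observation.
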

\begin{proof}
	Observe that $(0,0,0)$ is a continuous solution of the relaxed Euler system (\ref{eq:relaxedeulersystem}) with $\rho\equiv 0$. Define $\Omega:=\left(\frac{1}{2},\frac{3}{4} \right)^2$ and choose a function $\bar{e}\in C\left((0,T)\times \T^2\right)\cap C\left([0,T],L^1(\T^2)\right)$ such that $\bar{e}=0$ on $\T^2\backslash \Omega$ and $\bar{e}>0$ on $\Omega$ for all $t\in (0,T)$. Then Proposition~2 in \cite{DS10} implies the existence of infinitely many weak solutions $u\in L^{\infty}$ of (\ref{eq:incompressibleeuler}) on $[0,T)\times \T^2$ with corresponding pressure $P'=-\frac{1}{2}|u|^2=-\bar{e}\cdot\mathds{1}_{\Omega}$ a.e.~and $u=0$ on $\T^2\backslash\Omega$. It also holds that $u(0,x)=0$ for a.e.~$x\in\T^2$. As we are working on the torus $\T^2$ we choose the offset for the pressure such that it is average-free, i.e. we consider $P=P'-\underset{\T^2}{\dashint}P'\dx$.\\
	Thus, the initial data $u_0=0$ gives rise to the trivial solution $0$ and actually infinitely many weak solutions $(u,P)$. Moreover, as the continuous function $\bar{e}$ cannot have a constant nonzero value on $\Omega$, we know that $u\neq 0$ and $P\neq 0$ on a set of positive measure. Proposition~\ref{prop:notwaveconeconnexample} then yields the assertion.
\end{proof}

\begin{rem}
	Interestingly, in the situation of Corollary~\ref{cor:example} we observe the following:\\
	On the one hand, the augmented measure-valued solution $\nu=\lambda \delta_{(u_1,P_1)}+(1-\lambda)\delta_{(u_2,P_2)}$ cannot be generated by weak solutions of the incompressible Euler system. On the other hand, the measure-valued solution $\lambda\delta_{u_1}+(1-\lambda)\delta_{u_2}$ considered as a classical measure-valued solution, can always be generated by weak solutions $u_n\in L^{\infty}$ of (\ref{eq:incompressibleeuler}) by the results of \cite{SW12}. This seems to be a contradiction. However, it only tells us that if we write out the (dependent) pressure variable again, then $(u_n,P_n)$ generates some corresponding augmented measure-valued solution if $(u_n,P_n)$ is, say, uniformly $L^2$-bounded, but it cannot generate the measure $\nu$. This indicates that the solution concept of classical measure-valued solutions is too coarse to encode the information of low Mach limits. In particular, augmented measure-valued solutions are a proper generalization of the classical notion of measure-valued solution.
\end{rem}
As weak and measure-valued solutions of the incompressible Euler system are vastly non-unique, we should come up with some sort of selection principle.
\begin{rem}\label{rem:selectioncriterion}
	Since every natural fluid is compressible to some extent, the results we obtained in the present paper might suggest the following selection criterion for measure-valued solutions as formulated already in the introduction:\\
	\\
	\textit{We may discard every augmented measure-valued solution $\mu$ of (\ref{eq:incompressibleeuler}) as unphysical if it cannot be generated by a low Mach sequence of weak solutions of (\ref{eq:isentropiceulereps}).}\\
	\\
	This should be compared to, or even complemented by, the selection via vanishing viscosity limits, which is discussed in \cite{BTW12}. These two selection criteria might be actually connected, as we already saw that the Jensen condition (\ref{eq:jensencondition}) provides a necessary feature which both vanishing viscosity and low Mach limits have to fulfill. We also investigated the pressure-free case in which we are in fact able to decide for some solutions if they are a low Mach limit although they are not known to be a vanishing viscosity limit. Indeed, if $u_1$ and $u_2$ are weak solutions of the pressure-free Euler system and satisfy the prerequisites of Corollary~\ref{cor:pressurefreeexample}, then the measure $\lambda\delta_{(u_1,0)}+(1-\lambda)\delta_{(u_2,0)}$ is a low Mach limit.\\
	As a consequence one could justify that classical measure-valued solutions of (\ref{eq:incompressibleeuler}) should be discarded as unphysical if they have no extension as an augmented solution being a low Mach limit. However, this is quite speculative since it is not even clear the latter criterion is non-trivial. We only obtained necessary conditions for augmented solutions. It could still be possible that every classical measure-valued solution has an extension which is a low Mach limit. Further study on sharper necessary conditions will be needed for that.
\end{rem}
Let us conclude with the following observation.
\begin{rem}
	Consider a situation as in Theorem~\ref{theo:measuresequenceimpliessolution} with measure-valued solutions $\nu^{\eps}$ of (\ref{eq:isentropiceulereps}) and additionally assume that the initial data $(\rho^{\eps}_0,u^{\eps}_0)$ is \textit{well-prepared} in the sense of \cite{FKM19}, i.e.
	\begin{align*}
		\int\limits_{\T^d}^{}\frac{1}{2}\rho^{\eps}_0\left|u^{\eps}_0-u_0 \right|^2+\frac{1}{\eps}\frac{1}{\gamma-1}\left((\rho^{\eps}_0)^{\gamma}-\gamma\bar{\rho}^{\gamma-1}(\rho^{\eps}_0-\bar{\rho})-\bar{\rho}^{\gamma} \right)\dx\rightarrow 0\text{ as }\eps\rightarrow 0.
	\end{align*}
	If we also assume $u_0\in W^{k,2}\left(\T^d,\R^d\right)$ for some $k>\frac{d}{2}+1$, then the unique strong solution $U$ of (\ref{eq:incompressibleeuler}) with regularity $U\in C^1$ exists up to some time $T_{\operatorname{max}}$. Theorem~3.1 in \cite{FKM19} yields for $T<T_{\max}$ that
	\begin{align*}
		\underset{t\in(0,T)}{\esssup}\int\limits_{\T^d}^{}\left\langle(\nu^{\eps})_{(t,x)},\frac{1}{2}\rho|u-U(t,x)|^2+\frac{1}{\eps}\frac{1}{\gamma-1}\left(\rho^{\gamma}-\gamma\bar{\rho}^{\gamma-1}(\rho-\bar{\rho})-\bar{\rho}^{\gamma} \right)\right
		\rangle\dx\rightarrow 0
	\end{align*}
	as $\eps\rightarrow 0$. Since $\rho^{\gamma}-\gamma\bar{\rho}^{\gamma-1}(\rho-\bar{\rho})-\bar{\rho}^{\gamma}\geq 0$ for all $\rho>0$ this implies that
	\begin{align*}
		\left\langle(\nu^{\eps})_{(t,x)},\frac{1}{\eps}\frac{1}{\gamma-1}\left(\rho^{\gamma}-\gamma\bar{\rho}^{\gamma-1}(\rho-\bar{\rho})-\bar{\rho}^{\gamma} \right)\right
		\rangle\rightarrow 0\text{ in }L^1((0,T)\times \T^d).
	\end{align*}
	Hence, by using the uniformly compact support of $\nu^{\eps}$ this also implies weak* convergence in $L^{\infty}$. On the other hand, we know from Theorem~\ref{theo:measuresequenceimpliessolution} that $\left\langle\nu^{\eps},\frac{1}{\eps\bar{\rho}}(\rho^{\gamma}-\bar{\rho}^{\gamma})\right\rangle\overset{*}{\rightharpoonup}\langle\mu,P\rangle$ in $L^{\infty}$ for some augmented solution $\mu$.\\
	Therefore, we obtain that
	\begin{align*}
		\left\langle\nu^{\eps},\frac{1}{\eps\bar{\rho}}\gamma\bar{\rho}^{\gamma-1}(\rho-\bar{\rho})\right\rangle\overset{*}{\rightharpoonup}\langle\mu,P\rangle \text{ in }L^{\infty}.
	\end{align*}
	There is also an informal argument for this fact:\\
	Assume that the variables of (\ref{eq:isentropiceulereps}) can be expanded in terms of $\eps$, i.e. for example for the pressure it holds that $P=P^0+\eps P^1+\eps^2 P^2 +\ldots$ and similarly for $u$ and $\rho$. If the zeroth order term of the density fulfills $\rho^0=\bar{\rho}$, then collecting terms of the same order in $\eps$ and letting $\eps\rightarrow 0$ yields that $\left(u^0,\frac{P^1}{\rho^0}\right)$ is a solution of (\ref{eq:incompressibleeuler}). Moreover, by using the explicit form of $P(\rho)=\rho^{\gamma}$ we can Taylor expand it also as
	\begin{align*}
		P=\rho^{\gamma}&=(\rho(0))^{\gamma}+\eps \cdot\left(\frac{d}{d\eps}\bigg{|}_{\eps=0}(\rho(\eps)^{\gamma})\right)+\mathcal{O}(\eps^2)\\
		&=\bar{\rho}^{\gamma}+\eps\cdot \gamma\bar{\rho}^{\gamma-1}\rho^1+\mathcal{O}(\eps^2).
	\end{align*}
	Thus,
	\begin{align*}
		\lim\limits_{\eps\rightarrow 0}\frac{1}{\eps\bar{\rho}}\gamma\bar{\rho}^{\gamma-1}(\rho-\bar{\rho})=\gamma\bar{\rho}^{\gamma-2}\rho^1=\lim\limits_{\eps\rightarrow 0}\frac{1}{\eps\bar{\rho}}(\rho^{\gamma}-\bar{\rho}^{\gamma})=\lim\limits_{\eps\rightarrow 0}\frac{1}{\eps\rho^0}(P-P^0)= \frac{P^1}{\rho^0}
	\end{align*}
	and $\frac{P^1}{\rho^0}$ corresponds to the barycenter $\langle\mu,P\rangle$.
\end{rem}

\section*{Acknowledgements}
The author would like to thank Emil Wiedemann for introducing him to the problem of low Mach number limits and for many insightful discussions. The author is also very grateful for the suggestions and comments of the unknown referees.


\begin{thebibliography}{99}
	
	\bibitem{BTW12} C.~Bardos, E.~S.~Titi, E.~Wiedemann, The vanishing viscosity as a selection principle for the Euler equations: The case of 3D shear flow. {\em C.~R.~Math.~Acad.~Sci.~Paris}~{\bf 350}~(2012), no.~15--16, 757--760.
	
	\bibitem{BDS11} Y.~Brenier, C.~De~Lellis, L.~Sz\'ekelyhidi~Jr., Weak-strong uniqueness for measure-valued solutions. {\em Comm.~Math.~Phys.}~{\bf 305}~(2011), no.~2, 351--361.
	
	\bibitem{C14} E.~Chiodaroli, A counterexample to well-posedness of entropy solutions to the compressible Euler system. {\em J.~Hyperbolic~Differ.~Equ.}~{\bf 11}~(2014), no.~3, 493--519.
	
	\bibitem{CFKW15} E.~Chiodaroli, E.~Feireisl, O.~Kreml, E.~Wiedemann, $\mathcal{A}$-free rigidity and applications to the compressible Euler system. {\em Ann.~Math.~Pura~Appl.}~{\bf 196}~(2015), no.~4, 1557--1572.
	
	\bibitem{DSW21} T.~D\k{e}biec, J.~W.~D.~Skipper, E.~Wiedemann, A general convex integration scheme for the isentropic compressible Euler equations. {\em arXiv preprint}~\href{https://arxiv.org/abs/2107.10618}{arXiv:2107.10618}~(2021).
	
	\bibitem{DS09} C.~De~Lellis, L.~Székelyhidi~Jr., The Euler equations as a differential inclusion. {\em Ann.~Math.~(2)}~{\bf 170}~(2009), no.~3, 1417--1436.
	
	\bibitem{DS10} C.~De~Lellis, L.~Sz\'ekelyhidi~Jr., On admissibility criteria for weak solutions of the Euler equations. {\em Arch.~Ration.~Mech.~Anal.}~{\bf 195}~(2010), no.~1, 225--260.	
	
	\bibitem{D85} R.~J.~DiPerna, Measure-valued solutions to conservation laws. {\em Arch.~Rational~Mech.~Anal.}~{\bf 88}~(1985), no.~3, 223--270.
	
	\bibitem{DM87} R.~J.~DiPerna, A.~J.~Majda, Oscillations and concentrations in weak solutions of the incompressible fluid equations. {\em Comm.~Math.~Phys.}~{\bf 108}~(1987), no.~4, 667--689.
	
	\bibitem{E77} D.~G.~Ebin, The motion of slightly compressible fluids viewed as a motion with strong constraining force. {\em Ann.~Math.~(2)}~{\bf 105}~(1977), no.~1, 141--200.
	
	\bibitem{F14} E.~Feireisl, Maximal dissipation and well-posedness for the compressible Euler system. {\em J.~Math.~Fluid~Mech.}~{\bf 16}~(2014), no.~3, 447--461.
	
	\bibitem{FKM19} E.~Feireisl, C.~Klingenberg, S.~Markfelder, On the low Mach number limit for the compressible Euler system. {\em SIAM~J.~Math.~Anal.}~{\bf 51}~(2019), no.~2, 1496--1513.
	
	\bibitem{FL18} E.~Feireisl, M.~Luk\'a\v{c}ov\'a-Medvid'ov\'a, Convergence of a mixed finite element-finite volume scheme for the isentropic Navier-Stokes system via dissipative measure-valued solutions. {\em Found.~Comput.~Math.}~{\bf 18}~(2018), no.~3, 703--730.
	
	\bibitem{FM99} I.~Fonseca, S.~Müller, $\mathcal{A}$-quasiconvexity, lower semicontinuity, and Young measures. {\em SIAM~J.~Math.~Anal.}~{\bf 30}~(1999), no.~6, 1355--1390.
	
	\bibitem{G21} D.~Gallenmüller, Müller-Zhang truncation for general linear constraints with first or second order potential. {\em Calc.~Var.~Partial~Differential~Equations}~{\bf 60}~(2021), no.~3, 25~pp.
	
	\bibitem{GW20} D.~Gallenmüller, E.~Wiedemann, On the selection of measure-valued solutions for the isentropic Euler system. {\em J.~Differential~Equations}~{\bf 271}~(2021), no.~1, 979--1006.
	
	\bibitem{GW21} D.~Gallenmüller, E.~Wiedemann, Which measure-valued solutions of the monoatomic gas equations are generated by weak solutions? {\em arXiv preprint}~\href{https://arxiv.org/abs/2109.09513}{arXiv:2109.09513}~(2021).
	
	\bibitem{GSW15} P.~Gwiazda, A.~\'Swierczewska-Gwiazda, E.~Wiedemann, Weak-strong uniqueness for measure-valued solutions of some compressible fluids. {\em Nonlinearity}~{\bf 28}~(2015), no.~11, 3873--3890.
	
	\bibitem{KP91} D.~Kinderlehrer, P.~Pedregal, Characterizations of young measures generated by gradients. {\em Arch.~Ration.~Mech.~Anal.}~{\bf 115}~(1991), no.~4, 329--365.
	
	\bibitem{KP94} D.~Kinderlehrer, P.~Pedregal, Gradient Young measures generated by sequences in Sobolev spaces. {\em J.~Geom.~Anal.}~{\bf 4}~(1994), no.~1, 59--90.
	
	\bibitem{KM81} S.~Klainerman, A.~Majda, Singular limits of quasilinear hyperbolic systems with large parameters and the incompressible limit of compressible fluids. {\em Comm.~Pure~Appl.~Math.}~{\bf 34}~(1981), no.~4, 481--524.
	
	\bibitem{M21} S.~Markfelder, Convex integration applied to the multi-dimensional compressible Euler equations. Lecture Notes in Mathematics, Springer, Cham, 2021.
	
	\bibitem{M99} S.~Müller, A sharp version of Zhang's theorem on truncating sequences of gradients. {\em Trans.~Amer.~Math.~Soc.}~{\bf 351}~(1999), no.~11, 4585--4597.
	
	\bibitem{N93} J.~Neustupa, Measure-valued solutions of the Euler and Navier-Stokes equations for compressible barotropic fluids. {\em Math.~Nachr.}~{\bf 163}~(1993), no.~1, 217--227.
	
	\bibitem{R18} B.~Rai\c{t}\u{a}, Potentials for $\mathcal{A}$-quasiconvexity. {\em Calc.~Var.~Partial~Differential~Equations}~{\bf 58}~(2019), no.~3, Art.~105.
	
	\bibitem{Rindler} F.~Rindler, Calculus of Variations. Unversitext, Springer, Cham, 2018.
	
	\bibitem{SW12} L.~Sz\'ekelyhidi~Jr., E.~Wiedemann, Young measures generated by ideal incompressible fluid flows. {\em Arch.~Rational.~Mech.~Anal.}~{\bf 206}~(2012), no.~1, 333--366.
	
	\bibitem{W11} E.~Wiedemann, Existence of weak solutions for the incompressible Euler equations. {\em Ann.~Inst.~H.~Poincar\'e~Anal.~Non~Lin\'eaire}~{\bf 28}~(2011), no.~5, 727--730.
	
	\bibitem{W18} E.~Wiedemann, Weak-strong uniqueness in fluid dynamics. {\em London~Math.~Soc.~Lecture~Note~Ser.}~{\bf 452}~Cambridge Univ. Press, Cambridge (2018), 289--326.
\end{thebibliography}
\end{document}